\numberwithin{equation}{section}
\numberwithin{figure}{section}
\newtheorem{theorem}{Theorem}
\newtheorem{definition}{Definition}
\newtheorem{example}[theorem]{Example}
\newtheorem{remark}{Remark}
\newtheorem{assumption}{Assumption}
\begin{document}

\title[On global solvability of a class of possibly nonconvex QCQP in Hilbert]{On global solvability of a class of possibly nonconvex QCQP problems in Hilbert spaces
}

\author{
	Ewa M. Bednarczuk$^{1}\,^{2}$
}
\author{
	Giovanni Bruccola$^1$
}

\thanks{$^1$ Systems Research Institute, Polish Academy of Sciences, Newelska 6, 01-447 Warsaw}
\thanks{$^2$  Warsaw University of Technology, Koszykowa 75, 00-662 Warsaw}
\email{Ewa.Bednarczuk@ibspan.waw.pl} 
\email{Giovanni.Bruccola@ibspan.waw.pl}

\begin{abstract}
 We provide conditions ensuring that the KKT-type conditions characterizes the global optimality for
 quadratically constrained (possibly nonconvex) quadratic programming QCQP problems in Hilbert space. The key property is the convexity ofa  image-type set related to the functions appearing in the formulation of the problem. 
 The proof of the main result relies on a generalized version of the (Jakubovich) S-Lemma in Hilbert spaces.
 As an application, we consider the class of QCQP problems with a special form of the quadratic terms of the constraints.
\\ \textbf{Keywords:} Global solvability, QCQP problems, KKT conditions, Jakubovich lemma, S-lemma, image set\\
 \textbf{MSC2020:} 90C20, 90C23, 90C26, 90C46
\end{abstract}
\maketitle

\section{Introduction}
The aim of this work is to establish a sufficient condition under which the KKT-type conditions  characterizes the global optimality of a possibly non convex Quadratically Constrained Quadratic Programming (QCQP) problem in Hilbert spaces. 

Let $H$ be a Hilbert space with the inner product 
$\langle a, b \rangle$, for $a,b\in H$,  and the associated norm  $\|\cdot\|=\sqrt{\langle \cdot, \cdot \rangle}$. 

A generic QCQP problem is defined as follows.
\begin{equation}
    \label{QCQP1}
    \tag{QCQP}
    \begin{split}
        &{Minimize\,}_{x\in H} \ \ J(x):=\langle x, A_Jx\rangle+2\langle b_J, x\rangle+c_J\\
        &s.t.\ \ f_k(x):=\langle x, A_kx\rangle+2\langle b_k, x\rangle+c_k \le 0,\ \ k=1,...,m
    \end{split}
\end{equation}
 where 
  the continuous linear self-adjoint operators $A_J,A_k: H\rightarrow H$,   $b_J,b_k \in H $,  and $c_J,c_k \in \mathbb{R}$ are given data for $k=1,...,m$. 

Problems of the form QCQP (constrained and unconstrained)  appear in many contexts. For a recent application in succesive quadratic approximation method, see \cite{lee2019inexact}.
In $\mathbb{R}^n$,
QCQP problems also appear in the discretization of ill-posed problems,  \cite{golub1999tikhonov}, maximum clique problem, \cite{hosseinian2018nonconvex}, the circular packing problem, \cite{stetsyuk2016global} and the Chebyshev center problem, \cite{xia2021chebyshev}. Regarding Hilbert spaces, quadratic problems arise in calculus of variations, \cite{hestenes1951applications}, in optimal control, \cite{yakubovich1992nonconvex} and in in the context of reproducing kernel
Hilbert space, \cite{signoretto2008quadratically}.

We consider both general \ref{QCQP1} problems and  a special class of \ref{QCQP1} problems which satisfy the following assumption:
\begin{assumption}
\label{assumaI}
Let $A_J,A_{k}$ $\forall\,k\in\{1,...,m\}$ be of the form $A_J=a_JI,A_k=a_kI$, where $a_J,a_k\in\mathbb{R}$ and $I$ is the identity operator.
\end{assumption}

When \autoref{assumaI} holds, \ref{QCQP1} takes the form
\begin{equation}
    \label{QCQP2}
    \tag{S-QCQP}
    \begin{split}
        &{Minimize\,}_{x\in H} \ \ J(x):=a_J\|x\|^2+2\langle b_J,x\rangle+c_J\\
        &s.t.\ \ f_k(x):=a_k\|x\|^2+2\langle b_k,x\rangle+c_k \le 0,\ \ k=1,...,m
    \end{split}
\end{equation}

Since in \ref{QCQP2} the operators $A_J,A_k$, $A_J,A_k$ $k=1,...,m$ do not appear and instead we only have the scalars $a_J,a_k\in\mathbb{R}$, $k=1,...,m$, in the sequel we refer to \ref{QCQP2} as the \textit{scalar QCQP}.

\begin{remark}
    The scalars $a_J,a_k$, $k=1,...,m$ can be equal to zero for \ref{QCQP2}.
    When $a_J=0$ and/or $a_k=0$ for some $k$, the corresponding $J(x)$ or $f_k(x)$ are linear forms. In case $a_J=0$ and/or $a_k=0$ for all $k$, \ref{QCQP2}  become a linear or a quadratic convex optimization problem, respectively.
\qed
\end{remark}

\begin{definition}
Given a self-adjoint linear continous operator $A$ (i.e. $\langle Ax,x\rangle=\langle x,Ax\rangle$), the quadratic form ${A(x):=\langle x, Ax\rangle}$ is said to be positive if $\langle x, Ax\rangle>0$, $\forall\,x\in H\setminus \{0\}$ and non negative if $\langle x, Ax\rangle\ge0$, $\forall\,x\in H$.\qed
\end{definition}

By Proposition 3.71 of \cite{bonnans2013perturbation}, a quadratic form $\langle x, Ax\rangle+2\langle b,x\rangle +c$ is convex if and only if $A(x):=\langle x, Ax\rangle$ is non negative. 

We say that KKT conditions for QCQP are satisfied at a feasible point $x^{*}$ if there exists $\gamma=(\gamma_{1},...,\gamma_{m})$, $\gamma_{i}\ge 0$, $m=1,...,m$ such that
\begin{equation}
    \label{kkt0}
    \tag{KKT}
    \begin{split}
        &(i)\ \ \ \nabla(J+\sum\limits_{k=1}^m\gamma_k f_k)(x^*)=0, \text{stationarity} \\
        &(ii)\ \ \gamma_kf_k(x^*)=0\ \ k\in\{1,...,m\}, \text{complementarity} \\
        &(iii)\ (A_J+\sum\limits_{k=1}^m\gamma_kA_k)(x) \text{ non negative}.
    \end{split}
\end{equation}

Then, the optimal value of \eqref{QCQP1} is $J^*=J(x^*)$.

In general, under standard constraint qualification ( e.g. Mangasarian-Fromowitz CQ) KKT conditions are necessary for local optimality of $x^{*}$ for QCQP, see Theorem 3.6 of \cite{van2021optimality}.

In this paper, we determine conditions under which \ref{kkt0} are necessary and sufficient global optimality conditions in Hilbert spaces for \ref{QCQP1}, section 3, and \ref{QCQP2}, section 4.

 The authors of \cite{jeya1} prove that \ref{kkt0} are necessary and sufficient optimality conditions in $\mathbb{R}^n$ for \ref{QCQP1} problems if the matrices $H_J:=\begin{pmatrix}
A_J&b_J\\b_J^T&c_J
\end{pmatrix}$ and $H_k:=\begin{pmatrix}
A_k&b_k\\b_k^T&c_k
\end{pmatrix} $  $\forall\,k\in\{1,...,m\}$ are $Z$-matrices. $Z$-matrices are matrices with non positive off diagonal elements. 
In order to characterize the optimal value of Z-matrices QCQP, in \cite{jeya1} the authors propose a generalized version of the S-Lemma for $Z$-matrices QCQP, in the sense described in Section 2.

We apply the approach similar  to that proposed in \cite{jeya1} 
and we base our developments on a generalized version of the S-Lemma in Hilbert spaces.
The key property is the convexity of the \textit{Generalized Image Set} 
\begin{equation} \tag{GIS}
\label{GIS}
\Omega_0:=\{(f_0(x),f_1(x),...,f_m(x))\ |\ x\in H\}+ int\mathbb{R}^{m+1}_+,
\end{equation} $f_0:=J(x)-J^*$ and $'+'$ is the Minkowski sum of the range $\{(f_0(x),f_1(x),...,f_m(x))\ |\ x\in H\}$ and the interior of the cone $\mathbb{R}^{m+1}_+$. By \cite{holmes2012geometric}, exercise 2.1, the set $\Omega_{0}$ is open. 
Recall, that the image set of optimization problems defined as 
\begin{equation}
    \label{eq: image set gianessi}
    \{(f_0(x),f_1(x),...,f_m(x))\ |\ x\in H\}
\end{equation}
has been extensively investigated in the monograph by \cite{giannessi2006constrained}. 
In \cite{jeya1}, it is shown that \eqref{eq: image set gianessi} is not sufficient to establish if the \ref{kkt0} conditions are necessary and sufficient for global optimality of every class of \ref{QCQP1}. They consider instead \ref{GIS}.

Showing the convexity of \cite{giannessi2006constrained} or of \ref{GIS} is a key step in the proof of the famous Jakubovich S-Lemma, \cite{yakubovich1992nonconvex}.

The S-Lemma is an important result related to the  general S-procedure described in \cite{fradkov1},
treated from an historical point view in \cite{gusev1}. Paper \cite{polik1} provides a comprehensive survey on the S-Lemma in $\mathbb{R}^n$, while \cite{jeya2} explores the relations between the S-Lemma and the Lagrangian multipliers of \ref{QCQP1}.
In \cite{barro1}, an S-Lemma based approach similar to the one proposed by \cite{jeya1} and the present paper  is applied to problem with data uncertainty in $ \mathbb{R}^n$.
The paper \cite{RZ1} extend the results of \cite{jeya1} to Z-matrices \ref{QCQP1} in $ \mathbb{R}^n$ with infinite number of inequalities.
More recent generalization of the S Lemma in $\mathbb{R}^n$ appear in \cite{xia2016s}, \cite{song2022calabi}. Also in these latter papers, the sets  \eqref{eq: image set gianessi} and \ref{GIS} play an important role.

 In $\mathbb{R}^n$, the convexity of the set $\Omega_0$ for \eqref{QCQP2} is proved in \cite{beck2007convexity}, when $m+1\le n$.
For \eqref{QCQP1} with one or two constraints, i.e. $m=1$ and $m=2$, the convexity of $\Omega_0$ can be proved under some regularity conditions, starting from the results of Dine and Polyak (see \cite{jeya1}, \cite{polik1}).
For  Z-matrices \ref{QCQP1}, the convexity of $\Omega_0$ is proved in \cite{jeya1}. 

In \cite{continopolyak}, the results of  Dine and Polyak are generalized in Hilbert spaces. As a conseguence, a generalized form of the S-Lemma for three homogeneous quadratic functionals, i.e. $f_k(x):=\langle x, A_kx\rangle$, $k=0,1,2$, holds in Hilbert spaces. 

Other results concerning generalized S-Lemma in Hilbert spaces for homogeneous quadratic functionals can be found in \cite{yakubovich1992nonconvex}.

In section 3, we prove that, for general \eqref{QCQP1} in Hilbert space, if $\Omega_0$ is convex, then the generalized S-Lemma holds and the \eqref{kkt0} conditions are necessary and sufficient for global optimality.

Throughout the whole paper, we are making the following existence assumption.
\begin{assumption}
\label{preliminaryassumpt}
There exists a global minimum $x^*$ of problem \ref{QCQP1} and there exists $\gamma\in\mathbb{R}^m_+\backslash\ 0$ such that $(A_J+\sum\limits\gamma_kA_k)(x)$ is non negative.
\end{assumption}

The assumption $\exists\,\gamma\in\mathbb{R}^m_+\backslash\ 0$ such that the matrix $A_J+\sum\limits\gamma_kA_k\succeq0$, is a standard assumption in the literature related to finite dimensional QCQP, see e.g. \cite{loc0}, \cite{Boyd1} and \cite{blek1}. Observe that it is non verified by nonconvex QP (quadratic problems with linear constraints only), since $A_J\not\succeq0$. 
KKT conditions and the SDP relaxation for QP in $\mathbb{R}^n$ are studied e.g. in \cite{gon1}, \cite{burer2}.

The assumption that there exists a global minimum $x^*$ of problem \ref{QCQP1}, it is made to avoid discussing the existence of solution for \ref{QCQP1} problems in Hilbert spaces, which is covered by other papers (see e.g. \cite{dong2018solution}). 
When we will explicitly calculate the KKT conditions for \ref{QCQP2}, we will assume the objective function to be $J(x):=\|x-z\|^2$, $z\in H$ to guarantee the existence of a solution on a closed, but possibly non convex, constrained set. 

For convex QP problems (quadratic convex objective functions under linear constraints only) in Hilbert spaces, conditions for the existence of solutions and global optimality conditions can be found respectively in Theorem 3.128 and Theorem 3.130 of \cite{bonnans2013perturbation}.



The organization of the paper is as follows.
In Section 2, we provide the preliminaries concerning the notation, the S-Lemma, the Fermat rule and the convex separation theorem which are used in the sequel.

The main result of Section 3 is \autoref{globalminchar}, which provides the global minima characterization for general \ref{QCQP1} problems in the form of \ref{kkt} conditions, under the assumption that $\Omega_0$ is convex.

In Section 4, we apply \autoref{globalminchar} to provide a characterization of the global solution for \ref{QCQP2}.
The main result is \autoref{omega0isconvex} which proves the convexity of the set $\Omega_0$ for this class of QCQP problems.

\section{Preliminaries}

Given a vector $x\in \mathbb{R}^n$, we say $x\ge0$ when every component is non-negative and we define ${\mathbb{R}^n_{+}:=\{x\in\mathbb{R}^{n}\ |\ x\ge 0\}}$. 
$0$ may also denotes the all-zero vector in $H$.
Given two vectors $x,y\in H$, $\langle x,y\rangle$ denotes the inner product between $x$ and $y$. When $x=\mathbb{R}^n$, we have $\langle x,y\rangle=x^Ty=y^Tx$.
The corresponding norm is denoted by $\|\cdot\|=\sqrt{\langle\cdot,\cdot\rangle}$.

Let $S^n$ be the set of symmetric matrices in $\mathbb{R}^{n\times n}$.
$S^n_+$ and $S^n_{++}$ are the cones of symmetric matrices which are also positive semidefinite and positive definite, respectively. 
If a matrix $A$ belongs to $S^n_+$ then we write $A\succeq0$; if $A\in S^n_{++}$ then we write $A\succ0$.

By $diag(A)\in\mathbb{R}^n$, we denote the vector of the elements in the main diagonal of $A\in\ S^n$.
The \textit{trace inner product} $\langle A, B\rangle$,
between  symmetric matrices $A$, $B$  of dimension $n\times n$ is defined as $\langle A, B\rangle:=Tr(B^{T}A)=\sum_{i=1}^{n}\sum_{j=1}^{n}a_{ij}b_{ij}$.
 Let $|\cdot|$ denote the cardinality of a set.
 $C$ is an affine subspace if $C\neq\emptyset$ and $\forall\,\lambda\in\mathbb{R}$ and for all distinct $x,y\in C$
 \begin{equation*}
     \label{affine}
     \lambda x + (1-\lambda) y \in C
 \end{equation*}
 $\textit{aff}\, C$ denotes the affine hull of $C$, i.e. the smallest affine subspace of $H$ containing $C$, 
 $int\,C$ denotes the interior of $C$, 
 $$
int\,C:=\{x\in C\,|\,(\exists\,\epsilon>0)\,(x+\epsilon B_1)\subset C\}.
 $$
 $B_1:=\{x\,|\,\|x\|\le1\}$ be the \textit{unit ball} in $H$.
 The relative interior of $C$, denoted $ri\,C$, can be espressed in $\mathbb{R}^n$ as :
 \begin{equation}
     \label{riC}
     ri\,C:=\{x\in \textit{aff}\,C\,|\,(\exists\,\epsilon>0)\,(x+\epsilon B_1)\cap(\textit{aff}\,C)\subset C\}.
 \end{equation}

\subsection{Jakubovich S-Lemma, basic separation theorem}  
An important theorem for the optimality conditions of \ref{QCQP1} is the S-lemma.
We recall a generalized version of the S-Lemma that can be found in \cite{polik1}.

\begin{theorem} (Jakubovich  S-Lemma)
\label{Slemma}
Let $f,g: \mathbb{R}^{n} \to \mathbb{R}$ be quadratic functionals and suppose that there exists a point $x_0\in \mathbb{R}$ such that $g(x_0)< 0$. The following statements (i) and (ii) are equivalent.
\begin{align*}
    &(i)\ \ 
    (\nexists\, x \in \mathbb{R}^n)\ \ s.t.\ \
    f(x) < 0 \ \ g(x) \le 0
   \\
    &(ii)\ \ \exists \gamma \ge 0 \,\,\,such\,\,\,that\,\,\, f(x)+\gamma g(x) \ge 0 \,\,\, \forall x \in \mathbb{R}^{n}
\end{align*}



\end{theorem}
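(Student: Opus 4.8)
The plan is to prove the two implications separately; the reverse implication (ii) $\Rightarrow$ (i) is immediate, while the forward implication (i) $\Rightarrow$ (ii) carries all the difficulty and is where I would bring in a convex image set and a separation argument.

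For (ii) $\Rightarrow$ (i): suppose there is $\gamma \ge 0$ with $f(x) + \gamma g(x) \ge 0$ for all $x$. If some $x$ satisfied $f(x) < 0$ and $g(x) \le 0$, then $\gamma g(x) \le 0$ would give $f(x) + \gamma g(x) < 0$, contradicting the hypothesis; hence no such $x$ exists. This direction is a one-line contradiction and needs nothing more.

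For (i) $\Rightarrow$ (ii) I would work with the generalized image set
\[
\Omega := \{(f(x), g(x)) : x \in \mathbb{R}^n\} + \mathrm{int}\,\mathbb{R}^2_+,
\]
which is open as a Minkowski sum with an open set. The first observation is that (i) forces $(0,0)\notin\Omega$: membership would produce an $x$ with $f(x)<0$ and $g(x)<0$, which (i) excludes. I would then separate the point $(0,0)$ from the convex set $\Omega$, obtaining $(p,q)\neq(0,0)$ with $pu+qv\ge 0$ for all $(u,v)\in\Omega$. The sign and normalization analysis finishes the argument: fixing any $x_1$, the points $(f(x_1)+\alpha,\,g(x_1)+\beta)$ lie in $\Omega$ for all $\alpha,\beta>0$, and letting $\alpha\to+\infty$ (resp. $\beta\to+\infty$) forces $p\ge 0$ (resp. $q\ge 0$). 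The Slater-type hypothesis $g(x_0)<0$ rules out $p=0$, since $p=0$ would give $q>0$ and the bound would read $v\ge 0$ on $\Omega$, contradicted by $(f(x_0)+1,\,g(x_0)+\epsilon)\in\Omega$ for small $\epsilon>0$. Hence $p>0$; setting $\gamma:=q/p\ge 0$ and dividing gives $u+\gamma v\ge 0$ on $\Omega$, and shrinking the two added coordinates to $0^+$ at each $(f(x),g(x))$ yields $f(x)+\gamma g(x)\ge 0$ for all $x$, which is (ii).

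The main obstacle is the convexity of $\Omega$, on which the whole separation step rests; the joint range of two general quadratics need not be convex, so quadraticity must be exploited. I would homogenize, passing to quadratic forms $\tilde f,\tilde g$ on $\mathbb{R}^{n+1}$ with $\tilde f(x,1)=f(x)$ and $\tilde g(x,1)=g(x)$, and invoke the Dines theorem that the joint range of two homogeneous quadratic forms is convex. The delicate point is transferring convexity from the homogenized range back to $\Omega$ while controlling the contribution of the $t=0$ slice; this image-set convexity is precisely the property the rest of the paper isolates as its key hypothesis.
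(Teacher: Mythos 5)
First, a framing remark: the paper does not prove this theorem at all --- it is recalled from the P\'olik--Terlaky survey --- so the relevant comparisons are (a) the classical homogenization proof that survey gives, and (b) the paper's own Theorem \ref{th:shilbert}, which runs your exact separation argument but only under the \emph{hypothesis} that the image-plus-cone set is convex. Your easy direction and your entire separation block (openness of $\Omega$, $(0,0)\notin\Omega$, separation, signs of $(p,q)$, Slater forcing $p>0$, shrinking the perturbations) are correct and essentially reproduce the paper's Theorem \ref{th:shilbert} specialized to $m=1$. The problem is the step you yourself flag as delicate: it is not merely delicate, it is a genuine gap, and the classical proof does not take that route precisely because the transfer fails as stated. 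Dines' theorem gives convexity of the homogeneous joint range $W=\{(\tilde f(x,t),\tilde g(x,t))\ |\ (x,t)\in\mathbb{R}^{n+1}\}$, but $W$ and $\Omega$ are both subsets of $\mathbb{R}^2$, and $W$ is only the cone generated by $M=\{(f(x),g(x))\}$ together with the $t=0$ values $\{(x^{T}Ax,\,x^{T}Bx)\}$ of the pure quadratic parts. Given $v,w\in M$, convexity of $W$ tells you $\lambda v+(1-\lambda)w=(\tilde f(z),\tilde g(z))$ for some $z=(x,t)$; if $t\neq 0$ this point is $t^{2}\bigl(f(x/t),g(x/t)\bigr)$, a positive \emph{scaling} of an image point (and if that image point has negative coordinates, scaling moves it downward, out of reach of $M+\mathrm{int}\,\mathbb{R}^2_+$), while if $t=0$ it is a value of the homogeneous parts, which need not be dominated from below by any point of $M$ either. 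So convexity of $W$ simply does not yield convexity of $\Omega$; indeed the paper itself records that convexity of $\Omega_0$ for $m=1$ holds only ``under some regularity conditions,'' and elsewhere treats it as an assumption (Theorem \ref{th:shilbert}) or proves it only for the scalar class (Theorem \ref{omega0isconvex}). You have routed the S-Lemma through an unproven claim that is stronger than the S-Lemma itself.

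The repair is to separate in the homogenized space rather than transfer convexity back. Concretely: show that (i) together with $g(x_0)<0$ makes the strict homogeneous system $\tilde f(z)<0,\ \tilde g(z)<0$ inconsistent --- for $t\neq0$ scale by $1/t$ to contradict (i) directly; for $t=0$, if $x^{T}Ax<0$ and $x^{T}Bx<0$, then $f(x_0+sx)\to-\infty$ and $g(x_0+sx)\to-\infty$ as $|s|\to\infty$, again contradicting (i). Then apply Dines to get convexity of $W$, note $W\cap(-\mathrm{int}\,\mathbb{R}^2_+)=\emptyset$, separate to obtain $(p,q)\in\mathbb{R}^2_+\setminus\{0\}$ with $p\tilde f+q\tilde g\ge0$ on $\mathbb{R}^{n+1}$, use $\tilde g(x_0,1)=g(x_0)<0$ to rule out $p=0$, and evaluate at $t=1$. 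That yields (ii) with $\gamma=q/p$, and every step you already wrote for the sign and normalization analysis carries over verbatim to this setting.
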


Consider a collection of quadratic functionals ${f_k: H \to \mathbb{R}}$ ${(k=1,...,m)}$.
A theorem of the alternative is called generalized version of the S-Lemma if it establishes the conditions on the functionals $f_k$ under which only one between the following statements holds:
\begin{enumerate}
    \item $\exists\,x\in H$ such that $ f_k(x)<0\ \ \forall\,k\in\{1,...,m\}$
    \item $(\exists\,\gamma\in\mathbb{R}_+^m\backslash0)$ $\sum\limits_{k=1}^m\gamma_kf_k(x)\ge0$ $\forall\,x\in H$
\end{enumerate}

Let $\partial\,f(x)$ denote the convex subdifferential of $f$ at $x$, then we write $v\in\partial\,f(x)$ if
\begin{equation}
    \label{eq:subdifferential}
    f(y)-f(x)\ge \langle v, y-x\rangle \quad \forall\,y\in H.
\end{equation}

If $f$ is differentiable and convex, $\partial\,f(x)=\{\nabla f(x)\}$. By \eqref{eq:subdifferential}, it is possible to prove the Fermat's optimality condition, see e.g. Theorem 16.3 of \cite{bauschke2017convex}.
\begin{theorem}
\label{fermat}
(Fermat optimality conditions)
Let $f:H\rightarrow\ ]-\infty,+\infty]$ be proper. Then 
$$
Arg\,min\,f=\{x\in H\ |\ 0\in\partial f(x)\}.
$$
\end{theorem}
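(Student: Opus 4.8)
The plan is to establish the two inclusions $Arg\,min\,f \subseteq \{x\in H\ |\ 0\in\partial f(x)\}$ and $\{x\in H\ |\ 0\in\partial f(x)\}\subseteq Arg\,min\,f$ directly from the defining inequality \eqref{eq:subdifferential} of the subdifferential, specialized to the candidate subgradient $v=0$. No auxiliary machinery is needed: the whole statement is an unwrapping of that definition.

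First I would set $v=0$ in \eqref{eq:subdifferential}. The right-hand side becomes $\langle 0, y-x\rangle = 0$, so that $0\in\partial f(x)$ is, by definition, equivalent to
\[
f(y)-f(x)\ge 0 \qquad \forall\, y\in H,
\]
that is, to $f(y)\ge f(x)$ for every $y\in H$. This last condition is precisely the statement that $x$ is a global minimizer of $f$, and it yields both inclusions at once, so the two sets coincide.

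The only points requiring care concern the extended-real-valued and possibly nonconvex nature of $f$. I would note that the inequality in \eqref{eq:subdifferential} tacitly requires $f(x)$ to be finite, since by convention the subdifferential is empty at points where $f=+\infty$; hence $0\in\partial f(x)$ already forces $x\in\mathrm{dom}\,f$. Conversely, properness of $f$ guarantees $f\not\equiv+\infty$, so any minimizer attains the finite infimum and lies in $\mathrm{dom}\,f$, which keeps the difference $f(y)-f(x)$ well defined. I would also emphasize that convexity of $f$ plays no role here: the subgradient inequality with $v=0$ is a global comparison of values, so the equivalence holds for an arbitrary proper $f$, which is exactly what makes this characterization applicable to the nonconvex objectives considered later.

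The main obstacle, such as it is, is not analytic but a matter of bookkeeping with the $+\infty$ convention: one must fix $\partial f(x)=\emptyset$ when $f(x)=+\infty$ so that the claimed equality of sets holds globally on $H$ rather than merely on $\mathrm{dom}\,f$. Once that convention is in place, the proof reduces to the single-line substitution above.
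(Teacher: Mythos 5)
Your proof is correct and is essentially the argument behind the paper's (cited) proof: the paper simply invokes Theorem 16.3 of \cite{bauschke2017convex}, whose proof is exactly your substitution $v=0$ in \eqref{eq:subdifferential}, turning $0\in\partial f(x)$ into $f(y)\ge f(x)$ for all $y\in H$. Your bookkeeping about properness and the $+\infty$ convention is sound (indeed, for proper $f$ the emptiness of $\partial f(x)$ at points with $f(x)=+\infty$ follows automatically by testing the inequality against any $y\in\mathrm{dom}\,f$), so nothing is missing.
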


Let $C_1$ and $C_2$ be non-empty subsets in $\mathbb{R}^n$.

\begin{definition}
\label{hypersep}
(\cite{rocka}, section 11)
\begin{itemize}
    \item A hyperplane $P$ is said to \textit{separate} $C_1$ and $C_2$ if $C_1$ is contained in one of the closed half spaces associated to $P$ and $C_2$ is contained in the opposite closed half space.
\item $P$ is said to \textit{separate properly} $C_1$ and $C_2$ if they are not both contained in $P$ itself.
\end{itemize}
\end{definition}

We are ready to state the convex separation theorem in finite dimensions, that will be crucial in the next section.

\begin{theorem}
\label{convexsept}
(\cite{rocka}, Theorem 11.3)

Let $C_1$ and $C_2$ be non-empty convex sets in $\mathbb{R}^n$. 
In order that there exists a hyperplane that separates $C_1$ and $C_2$ properly, it is necessary and sufficient that $ri\,C_1$ and $ri\,C_2$ have no point in common. 
\end{theorem}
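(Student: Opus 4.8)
The plan is to collapse the two-set problem into the separation of the origin from a single convex set, and then to characterize that separation through the relative interior. First I would pass to the \emph{difference set} $C := C_1 - C_2 = \{x_1 - x_2 \mid x_1 \in C_1,\ x_2 \in C_2\}$, which is again nonempty and convex. The decisive elementary observation is that a hyperplane $\{x \mid \langle b, x\rangle = \beta\}$ places $C_1$ in the half-space $\langle b, \cdot\rangle \le \beta$ and $C_2$ in $\langle b, \cdot\rangle \ge \beta$ precisely when $C$ lies in the half-space $\langle b, \cdot\rangle \le 0$, because $\langle b, x_1\rangle \le \beta \le \langle b, x_2\rangle$ forces $\langle b, x_1 - x_2\rangle \le 0$; conversely, given such a $b$ one recovers an admissible $\beta$ by taking any value between $\sup_{C_1}\langle b, \cdot\rangle$ and $\inf_{C_2}\langle b, \cdot\rangle$. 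Moreover the separation is \emph{proper} for the pair $C_1, C_2$ exactly when $C$ is not entirely contained in $\{x \mid \langle b, x\rangle = 0\}$, i.e. when the origin and $C$ are properly separated. Thus the theorem reduces to the assertion that the origin can be properly separated from $C$ if and only if $ri\,C_1$ and $ri\,C_2$ are disjoint.

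Next I would translate the disjointness hypothesis into a membership statement about $C$. Using the calculus of the relative interior under affine images, namely $ri(C_1 - C_2) = ri\,C_1 - ri\,C_2$, one gets $0 \in ri\,C$ if and only if there exist $u \in ri\,C_1$, $v \in ri\,C_2$ with $u = v$, that is, if and only if $ri\,C_1 \cap ri\,C_2 \neq \emptyset$. Hence the hypothesis ``$ri\,C_1 \cap ri\,C_2 = \emptyset$'' is exactly ``$0 \notin ri\,C$'', and it remains to establish the \emph{point-versus-set} lemma: for a nonempty convex set $C$ in $\mathbb{R}^n$, the origin is properly separated from $C$ if and only if $0 \notin ri\,C$.

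For the necessity direction of that lemma, if $0 \in ri\,C$ and some $b \neq 0$ satisfied $\langle b, x\rangle \ge 0$ on $C$, then $\langle b, \cdot\rangle$ would attain its minimum over $C$ at the relative-interior point $0$, which forces it to be constant on $C$; then both $\{0\}$ and $C$ lie in $\{\langle b, \cdot\rangle = 0\}$, so the separation is not proper. For sufficiency, assume $0 \notin ri\,C$ and split according to $\textit{aff}\,C$. If $0 \notin \textit{aff}\,C$, any $b$ orthogonal to the direction subspace of $\textit{aff}\,C$ with $\langle b, 0\rangle \neq \langle b, c_0\rangle$ for $c_0 \in C$ separates strictly, hence properly. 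If instead $0 \in \textit{aff}\,C$ but $0$ is a relative-boundary point, I would work inside $\textit{aff}\,C$, where $ri\,C$ is a genuinely open, nonempty convex set, and apply the supporting-hyperplane theorem in that subspace at $0$, obtaining a functional that supports $C$ at $0$ without containing all of $C$, which I then extend to a hyperplane of $\mathbb{R}^n$.

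I expect the main obstacle to be the relative-interior calculus invoked in the second step — the identity $ri(C_1 - C_2) = ri\,C_1 - ri\,C_2$ together with the nonemptiness of the relative interior of a nonempty convex set — and, within the point-versus-set lemma, the careful bookkeeping in the relative-boundary case needed to guarantee that the supporting hyperplane produced is \emph{proper} rather than one containing $C$ in its entirety.
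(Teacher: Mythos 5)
The paper does not actually prove this statement: it is quoted, with attribution, from Rockafellar (\cite{rocka}, Theorem 11.3) and used as a black box, so there is no internal proof to compare against. Your argument is, in substance, exactly the classical proof from the cited source: reduce to the difference set $C=C_1-C_2$; observe that separation and properness transfer between the pair $(C_1,C_2)$ and the pair $(\{0\},C)$; use the relative-interior calculus $ri(C_1-C_2)=ri\,C_1-ri\,C_2$ (\cite{rocka}, Corollary 6.6.2) to translate $ri\,C_1\cap ri\,C_2=\emptyset$ into $0\notin ri\,C$; and conclude by the point-versus-set criterion (\cite{rocka}, Theorem 11.1), whose necessity half you correctly prove via the fact that a linear functional minimized over $C$ at a point of $ri\,C$ is constant on $C$. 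All of these reduction steps check out.

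Two minor points, neither fatal. First, in the sufficiency half of your point-versus-set lemma, the case split ``$0\notin \textit{aff}\,C$'' versus ``$0\in \textit{aff}\,C$ and $0$ a relative-boundary point'' omits the subcase $0\in \textit{aff}\,C$ but $0\notin \overline{C}$, where $0$ is not a support point and the supporting-hyperplane theorem does not apply; this is the easiest subcase (strict separation of a point from a closed convex set via the projection theorem, which is automatically proper), or you can avoid the split entirely by separating $0$ from the nonempty, relatively open convex set $ri\,C$ inside $\textit{aff}\,C$ and then arguing, as in your necessity step, that the resulting functional is strictly positive on $ri\,C$ and hence cannot annihilate all of $C$. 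Second, you implicitly rely on $ri\,C\neq\emptyset$ for a nonempty convex set in $\mathbb{R}^n$ (\cite{rocka}, Theorem 6.2); this is standard, but the properness of the hyperplane you produce hinges on it, so it deserves explicit mention.
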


For $n$-dimensional convex sets in $\mathbb{R}^n$, $\textit{aff}\,C=\mathbb{R}^{n}$  and so, by \eqref{riC}, we have $ri\,C=int\,C$ (\cite{rocka}, section 6).

Hence, we can rewrite \autoref{convexsept} as follows.
\begin{theorem}
\label{convexsepth}

Let $C_1$ and $C_2$ be $n$-dimensional non-empty convex sets in $\mathbb{R}^n$. 
In order that there exists a hyperplane that separates $C_1$ and $C_2$ properly, it is necessary and sufficient that $int\,C_1$ and $int\,C_2$ have no point in common. 
\end{theorem}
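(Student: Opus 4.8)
The plan is to obtain \autoref{convexsepth} as an immediate specialization of \autoref{convexsept}, the only work being to replace the relative interiors appearing there by ordinary interiors; this replacement is legitimate precisely because each $C_i$ is assumed to be $n$-dimensional.

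First I would unwind the hypothesis that $C$ is an $n$-dimensional convex set in $\mathbb{R}^n$. By definition the dimension of a convex set is the dimension of its affine hull, so this hypothesis is equivalent to $\textit{aff}\,C=\mathbb{R}^n$. I would then feed this into the characterization \eqref{riC} of the relative interior: since $\textit{aff}\,C=\mathbb{R}^n$, the intersection $(x+\epsilon B_1)\cap(\textit{aff}\,C)$ in \eqref{riC} equals $x+\epsilon B_1$, so the defining condition for $ri\,C$ collapses to the requirement that $(x+\epsilon B_1)\subset C$ for some $\epsilon>0$, which is exactly the condition defining $int\,C$. Hence $ri\,C_i=int\,C_i$ for $i=1,2$ (this is also recorded in \cite{rocka}, section 6).

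With the identity $ri\,C_i=int\,C_i$ established, the equivalence follows by direct substitution. \autoref{convexsept} states that, for nonempty convex $C_1,C_2\subseteq\mathbb{R}^n$, a properly separating hyperplane exists if and only if $ri\,C_1$ and $ri\,C_2$ have no common point. Replacing each $ri\,C_i$ by $int\,C_i$ yields that such a hyperplane exists if and only if $int\,C_1$ and $int\,C_2$ have no common point, which is the assertion.

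There is no genuine obstacle here, since the statement is a reformulation rather than a new theorem; the single point that warrants care is the justification of $ri\,C=int\,C$, namely verifying that the $n$-dimensionality hypothesis forces $\textit{aff}\,C=\mathbb{R}^n$ and that \eqref{riC} then reduces to the definition of the interior. This is the main, albeit elementary, step, and everything else is a transcription of \autoref{convexsept}.
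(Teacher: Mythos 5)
Your proposal is correct and follows exactly the paper's route: the paper also derives \autoref{convexsepth} from \autoref{convexsept} by observing that $n$-dimensionality forces $\textit{aff}\,C=\mathbb{R}^n$, whence \eqref{riC} gives $ri\,C=int\,C$ (citing \cite{rocka}, section 6), and then substituting. Your writeup merely spells out this reduction in slightly more detail than the paper does.
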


\subsection{ Direct sum decompositions in Hilbert spaces}
We apply the following concepts and results in the fourth section of the present work. This subsection is based on the monograph \cite{deutsch2001best}.

If $H_1$ and $H_2$ are subspaces of a { Hilbert space   $H$} 
we write ${H=H_1 \oplus H_2}$
if each $x \in H$ has a unique representation in the form $x = x_1 + x_2$, 
where $x_1\in H_1$ and $x_2\in H_2$. 
${H_1 \oplus H_2}$ is the direct sum of $H_{1}$ and $H_{2}$. 
If $H_1$ is a closed subspace of $H$, one can always find a subspace $H_2$ such that ${H=H_1 \oplus H_2}$, $H_2$  is called a complement to $H_1$ 
and is orthogonal to $H_1$,  $H_2=H_1^{\perp}$, i.e. $\langle x_{1}, x_{2}\rangle =0$, for any $x_{1}\in H_1$, $x_{2}\in H_2$.

\begin{definition}
\label{def:Schauderbasis}
Given a finite dimensional subset $H_1$ of a Hilbert space $H$, a basis of $H_1$ is a set of maximal linearly indipendent vectors $y_k\in H_1$, $k=1,2,..,m$, $m\in \mathbb{N}$ such that every vector $y$ of $H_1$ can be represented as $\sum\limits_{k=1}^{m}\alpha_ky_k$. We say that the set of vectors $y_k\in H$, $k=1,2,..,m$, $m\in \mathbb{N}$ spans $H_1$. 
The dimension of a subspace $H_1$ of $H$, denoted $\text{dim }H_1$, is equal the number of vectors in a basis of  $H_1$. 
\end{definition}

\begin{definition}
\label{def:codim}(Definition 7.9 from \cite{deutsch2001best})
Let $H$ be a inner product space.
A closed subspace $H_1$ of $H$ is said to have codimension $m<\infty$, 
written ${\text{codim }H_1 = m}$, if there exists a subspace $H_2$ with $\text{dim }H_2=m$ such that $H=H_1 \oplus H_2$. 
\end{definition}

The following results is a consequence of Theorem 7.11 and Theorem 7.12 from \cite{deutsch2001best}.
\begin{theorem}
    \label{th:deu712}
    Let $H$ be a Hilbert space.
   Let $V$ be a closed subspace of  $H$ and $m$ be a positive integer. Then, $\text{codim }V=m$ if and only if 
$$V = \bigcap\limits_{i=1}^m\{x \in H\ |\ \langle b_i,x\rangle = 0\},$$ 
for some linearly independent set $\{b_l,b_2, \cdots,b_m \}$ in $H$.
\end{theorem}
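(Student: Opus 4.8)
The plan is to transfer the whole statement onto the orthogonal complement $V^{\perp}$ and reduce the stated equivalence to the single identity $\text{codim }V=\dim V^{\perp}$, using throughout that $V$ is closed. The facts I would lean on are those recalled in this subsection together with the standard Hilbert-space relations $H=V\oplus V^{\perp}$ and $(V^{\perp})^{\perp}=V$, the latter holding precisely because $V$ is closed (it follows directly from $H=V\oplus V^{\perp}$ and orthogonality: if $x\in(V^{\perp})^{\perp}$ and $x=v+w$ with $v\in V$, $w\in V^{\perp}$, then $w=x-v\in V^{\perp}\cap(V^{\perp})^{\perp}=\{0\}$).

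The crux is the identity $\text{codim }V=\dim V^{\perp}$. Since $V$ is closed, $V^{\perp}$ is itself a complement, i.e.\ $H=V\oplus V^{\perp}$. If some subspace $H_2$ with $H=V\oplus H_2$ satisfies $\dim H_2=m$, then restricting the canonical projection $\pi\colon H\to H/V$ to any complement yields a linear isomorphism onto $H/V$: it is injective because the kernel is $H_2\cap V=\{0\}$, and surjective because every $x=v+h$ with $v\in V$, $h\in H_2$ has $\pi(x)=\pi(h)$. Hence $\dim V^{\perp}=\dim(H/V)=\dim H_2$, so every complement has the same dimension and $\text{codim }V=m$ is equivalent to $\dim V^{\perp}=m$. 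This is the only delicate point, and it is exactly what Theorems 7.11 and 7.12 of \cite{deutsch2001best} encapsulate.

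Granting this, the forward direction is immediate: if $\text{codim }V=m$ then $\dim V^{\perp}=m$, so I would take any basis $\{b_1,\dots,b_m\}$ of $V^{\perp}$, which is linearly independent by \autoref{def:Schauderbasis}. Since these vectors span $V^{\perp}$, the conditions $\langle b_i,x\rangle=0$ for $i=1,\dots,m$ hold simultaneously iff $x\perp V^{\perp}$, that is iff $x\in(V^{\perp})^{\perp}=V$; hence $V=\bigcap_{i=1}^{m}\{x\in H:\langle b_i,x\rangle=0\}$. For the converse, given linearly independent $b_1,\dots,b_m$ with $V=\bigcap_{i=1}^{m}\{x:\langle b_i,x\rangle=0\}$, I would set $W:=\text{span}\{b_1,\dots,b_m\}$, so that $V=W^{\perp}$. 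As $W$ is finite dimensional it is closed, whence $V^{\perp}=(W^{\perp})^{\perp}=W$ and $\dim V^{\perp}=\dim W=m$ by linear independence; the identity above then gives $\text{codim }V=m$.

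The main obstacle is the dimension-invariance step $\text{codim }V=\dim V^{\perp}$: the definition of codimension (\autoref{def:codim}) permits an arbitrary, not necessarily orthogonal, complement, so one must verify that the dimension of a complement depends only on $V$ and not on the particular complement chosen. Once this is settled, the rest is a routine use of $(V^{\perp})^{\perp}=V$ for closed $V$, the closedness of finite-dimensional subspaces, and the spanning property of a basis of $V^{\perp}$.
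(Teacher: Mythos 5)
Your proof is correct, and it is genuinely more self-contained than what the paper does: the paper gives no argument at all for \autoref{th:deu712}, stating only that it ``is a consequence of Theorem 7.11 and Theorem 7.12 from \cite{deutsch2001best}.'' In effect you have reproved the cited results. You correctly isolate the one delicate point, namely that \autoref{def:codim} allows an arbitrary (not necessarily orthogonal) complement, so one must show $\text{codim}\,V=\dim V^{\perp}$; your quotient-map argument --- that the canonical projection $\pi\colon H\to H/V$ restricts to a linear isomorphism on any algebraic complement, forcing all complements to share the dimension $\dim(H/V)$ --- is purely algebraic and settles this cleanly. The remaining steps check out: closedness of $V$ is used exactly where needed, via $H=V\oplus V^{\perp}$ and your direct verification that $(V^{\perp})^{\perp}=V$; in the forward direction a basis of $V^{\perp}$ gives the hyperplane representation since $\langle b_i,x\rangle=0$ for all $i$ is equivalent to $x\perp V^{\perp}$ by linearity; and in the converse, setting $W=\text{span}\{b_1,\dots,b_m\}$ gives $V=W^{\perp}$, with $W$ closed because finite dimensional, so $V^{\perp}=(W^{\perp})^{\perp}=W$ has dimension $m$ by linear independence (note this also shows the closedness hypothesis on $V$ is automatic in that direction). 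What your route buys is independence from the reference: a reader gets a complete proof from the projection theorem plus elementary linear algebra, whereas the paper's approach buys brevity at the cost of opacity about precisely the complement-invariance issue you flagged.
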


We end this section with the following theorem on linear operators in Hibert spaces. {Let us recall that, for a given linear continuous operator $A : H_1 \rightarrow H_2$ acting between two Hilbert spaces $H_{1}$ and $H_{2}$,
$$ 
\text{range}\, A:=\{y\in H_{2}\ | \  y=Ax\ \text{for some}\ x\in H_{1} \}, \ \ \text{ker }A:=\{x\in H\ |\ Ax=0\}.
$$
Let $\langle \cdot,\cdot\rangle_1$ and $\langle \cdot,\cdot\rangle_2$ be the inner products associated respectively to $H_1$ and $H_2$. 
The adjoint operator $A^{*}:H_{2}\rightarrow H_{1}$ is a continuous linear operator such that
$$
\langle Ax, y\rangle_{2}=\langle x\ A^{*}y\rangle_{1}.
$$
When $H_{1}$ and $H_{2}$ are finite-dimensional spaces,  and the operator $A$ is represented by a matrix $A$ then the adoint operator is represented by the transposed matrix $A^{T}$.
}
\begin{theorem}
\label{th:linop}
Let $H_1$, $H_2$ be Hilbert spaces equipped respectively with inner products $\langle\cdot,\cdot\rangle_{1}$ and $\langle\cdot,\cdot\rangle_{2}$ . If $A : H_1 \rightarrow H_2$ is a continuous linear operator, then
\begin{equation}
    \label{eq:linop}
    \overline{\text{range}}\,A = (\text{ker }A^*)^{\perp},\qquad \text{ker }A=(\text{range }A)^{\perp}
\end{equation}
where $\overline{C}$ denotes the closure of the set $C$, and ${C^{\perp}=\{ x\in H\ |\ \langle x,c\rangle=0,\,\forall\,c\in C\}}$.
\end{theorem}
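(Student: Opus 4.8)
The plan is to derive both identities of \eqref{eq:linop} from a single elementary computation relating the orthogonal complement of the range of an operator to the kernel of its adjoint, and then to recover the stated formulas by taking orthogonal complements and by interchanging the roles of $A$ and $A^*$.

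First I would establish the core identity
$$
(\text{range}\,A)^{\perp} = \text{ker}\,A^*.
$$
To see this, take $y \in H_2$. By definition of the orthogonal complement, $y \in (\text{range}\,A)^{\perp}$ means $\langle Ax, y\rangle_2 = 0$ for every $x \in H_1$. Using the defining property of the adjoint, $\langle Ax, y\rangle_2 = \langle x, A^*y\rangle_1$, so this condition reads $\langle x, A^*y\rangle_1 = 0$ for all $x \in H_1$. Since the inner product is nondegenerate, this holds precisely when $A^*y = 0$, that is, when $y \in \text{ker}\,A^*$. Replacing $A$ by $A^*$ in this identity and invoking the involution property $A^{**} = A$ (valid for continuous linear operators between Hilbert spaces) immediately yields $(\text{range}\,A^*)^{\perp} = \text{ker}\,A$, which is the second assertion of the theorem.

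Next I would obtain the first formula by taking orthogonal complements in the core identity and invoking the standard Hilbert-space fact that, for any subspace $M$, one has $(M^{\perp})^{\perp} = \overline{M}$. Since $\text{range}\,A$ is a linear subspace of $H_2$, applying $(\cdot)^{\perp}$ to $(\text{range}\,A)^{\perp} = \text{ker}\,A^*$ gives
$$
(\text{ker}\,A^*)^{\perp} = \big((\text{range}\,A)^{\perp}\big)^{\perp} = \overline{\text{range}\,A},
$$
which is exactly $\overline{\text{range}}\,A = (\text{ker}\,A^*)^{\perp}$.

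The only subtle point — and the reason the two formulas are not symmetric — is the appearance of the closure. The kernel of a continuous operator is always closed, being the preimage of $\{0\}$, so no closure is needed in the kernel identity; by contrast the range of $A$ need not be closed, so the double-complement operation recovers only $\overline{\text{range}\,A}$ rather than $\text{range}\,A$ itself. I expect the main (mild) obstacle to be invoking the double-complement identity $(M^{\perp})^{\perp} = \overline{M}$ at the correct level of generality and recalling that the adjoint $A^*$ exists and is itself continuous and linear; both are guaranteed for bounded operators on Hilbert spaces by the Riesz representation theorem.
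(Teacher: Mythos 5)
Your proof is correct and follows essentially the same route as the paper: both arguments rest on the adjoint identity relating $(\text{range}\,A)^{\perp}$ to $\text{ker}\,A^{*}$, the double-complement fact $(M^{\perp})^{\perp}=\overline{M}$ for a subspace $M$, and $A^{**}=A$; you merely establish the core identity as a single chain of equivalences, where the paper splits it into two inclusions (one proved directly and closed up, the other recovered by complementation). One caution: the second formula as printed, $\text{ker}\,A=(\text{range}\,A)^{\perp}$, is a typo --- the left side lies in $H_1$ and the right side in $H_2$ --- and what your substitution $A\mapsto A^{*}$ actually yields, namely $\text{ker}\,A=(\text{range}\,A^{*})^{\perp}$, is the correct statement, consistent with the decomposition $H_1=\text{ker}\,A\oplus\text{range}\,A^{*}$ the paper records immediately afterwards; so you have silently (and rightly) proved the corrected version rather than the statement verbatim, and it would be worth saying so explicitly.
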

\begin{proof}

Let $x \in \text{range }A \subset H_2$. There exists $y \in H_1$ such that $x = Ay$.
For any $z \in \text{ker } A^*\subset H_2$ we have
\begin{equation*}
    \langle x, z\rangle_2  = \langle Ay, z\rangle_2 = \langle y, A^*z\rangle_1.
\end{equation*}
This proves that $\text{range }A \subset (\text{ker } A^*)^{\perp}$. Since $(\text{ker } A^*)^{\perp}$ is closed, it follows that $\overline{\text{range}}\,A \subset (\text{ker } A^*)^{\perp}$.
On the other hand, if $x \in (\text{range }A)^{\perp} \subset H_2$ , then for all $y \in H_1$ we
have
$$0 = \langle Ay, x\rangle_2 = \langle y, A^*x\rangle_1,$$
i.e. $A^*x = 0$. This means that $(\text{range }A)^{\perp}\subset  \text{ker } A^*$.
By taking the orthogonal complement of this relation, we get
\begin{equation*}
    (\text{ker } A^*)^{\perp}\subset (\text{range }A)^{\perp\perp}=\overline{\text{range}}\,A
\end{equation*}

which proves the first part of \eqref{eq:linop}. To prove the second part, we apply
the first part to $A^*$
instead of $A$, use $A^{**} = A$ and take the orthogonal
complement.
\end{proof}
An equivalent formulation of this theorem is that for any continuous
linear operator $A : H_1 \rightarrow H_2$ we have
\begin{equation}
    \label{eq:range+ker}
    H_2=\overline{\text{range}}\,A\oplus\text{ker }A^*,\quad H_1=\text{ker }A\oplus\text{range }A^*
\end{equation}

\section{Global Minima Characterization for general (QCQP)}
\label{global:characterisation}

In this section we characterize global minima of  \ref{QCQP1} problem by  \eqref{kkt} conditions derived with the help of a generalized form of the S-Lemma as defined in Section 2.

Our approach is inspired by the one proposed in \cite{jeya1} to characterize, in $\mathbb{R}^n$, the global minima of $Z$-matrices \ref{QCQP1}, i.e. \ref{QCQP1} with the matrices
\begin{equation}
\label{matrix:H}
H_k:=\begin{pmatrix}A_k&b_k\\b_k^T&c_k\end{pmatrix}\ \ k=1,...,m\text{ and }H_J:=\begin{pmatrix}A_J&b_J\\b_J^T&c_J\end{pmatrix}
\end{equation}
having all the off diagonal elements non positive.

Consider a collection of quadratic functionals $f_{k}:H\rightarrow\mathbb{R}$, $k=0,1,...,m$ defined on a Hilbert space $H$,
\begin{equation} 
\label{eq:functionals}
f_{k}(x):=\langle x, A_{k}x\rangle +\langle b_{k},x\rangle+c_{k},
\end{equation} 
where $A_{k}:H\rightarrow H$ are self-adjoint linear continuous operators acting on the space $H$ and $b_{k}\in H$, $k=0, 1,...,m$.
\begin{theorem}
 \label{th:shilbert}   
 Let $H$ be a Hilbert space and let $f_{k}:H\rightarrow\mathbb{R}$, $k=0,1,...,m$  be a finite collection of functionals of the form \eqref{eq:functionals}. Consider the set \ref{GIS}, defined as
 \begin{equation*}
    \Omega_0:=\{(f_0(x),f_1(x),...,f_m(x))\ |\ x\in H\}+ int\mathbb{R}^{m+1}_+.
\end{equation*}
 Assume that the set \ref{GIS} is convex. 
 Then, exactly one of the following statements is valid:
\begin{enumerate}
    \item[{(i)}] $\exists\,\bar{x}\in H$ such that $ f_k(\bar{x})<0\ \ \forall\,k\in\{0,...,m\}$
    \item[{(ii)}] $(\exists\,\gamma\in\mathbb{R}_+^{m+1}\backslash\textbf{0}_{m+1})$ $\sum\limits_{k=0}^m\gamma_kf_k(x)\ge0$ $\forall\,x\in H$
\end{enumerate} 
\end{theorem}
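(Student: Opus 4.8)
This is a theorem-of-the-alternative, so the natural structure is to show (a) both statements cannot hold simultaneously, and (b) if (i) fails then (ii) holds. The first part is easy: if there were an $\bar x$ with $f_k(\bar x)<0$ for all $k$ and a $\gamma\in\mathbb{R}^{m+1}_+\setminus\mathbf{0}_{m+1}$ with $\sum_k\gamma_k f_k\ge 0$ everywhere, then evaluating at $\bar x$ gives $\sum_k\gamma_k f_k(\bar x)<0$ (since at least one $\gamma_k>0$ and all $f_k(\bar x)<0$), a contradiction. So the content is entirely in the implication $\neg(i)\Rightarrow(ii)$.

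The plan for that implication is the standard separation argument underlying the S-lemma, now routed through the convexity of $\Omega_0$. Assume (i) fails: there is no $\bar x\in H$ with all $f_k(\bar x)<0$. I would first translate this into a statement about $\Omega_0$. By definition $\Omega_0=\{(f_0(x),\dots,f_m(x))\mid x\in H\}+\mathrm{int}\,\mathbb{R}^{m+1}_+$, and a point $y$ lies in $\Omega_0$ iff there is some $x$ with $f_k(x)<y_k$ for every $k$. The failure of (i) should be shown to be equivalent to $\mathbf{0}_{m+1}\notin\Omega_0$: indeed, $\mathbf{0}_{m+1}\in\Omega_0$ would mean some $x$ has $f_k(x)<0$ for all $k$, which is exactly (i). So $\neg(i)$ gives $\mathbf{0}_{m+1}\notin\Omega_0$.

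Now I would invoke the hypothesis that $\Omega_0$ is convex, together with the fact (noted in the excerpt, via \cite{holmes2012geometric}) that $\Omega_0$ is open. Since $\Omega_0$ is a convex subset of the finite-dimensional space $\mathbb{R}^{m+1}$ and $\mathbf{0}_{m+1}\notin\Omega_0$, I can separate the point $\{\mathbf{0}_{m+1}\}$ from $\Omega_0$ using \autoref{convexsepth} (applying it to $C_1=\Omega_0$ and $C_2=\{\mathbf 0\}$, noting $\Omega_0$ is open so $\mathrm{int}\,\Omega_0=\Omega_0$ and $\mathbf 0\notin\mathrm{int}\,\Omega_0$). This yields a nonzero $\gamma\in\mathbb{R}^{m+1}$ with $\langle\gamma,y\rangle\ge 0$ for all $y\in\Omega_0$. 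Two things must then be extracted from this inequality. First, $\gamma\ge 0$ componentwise: because $\Omega_0$ contains $(f_0(x),\dots,f_m(x))+te_k$ for arbitrarily large $t>0$ and each coordinate direction $e_k$, a negative component of $\gamma$ would drive $\langle\gamma,y\rangle\to-\infty$, contradicting the bound; hence $\gamma\in\mathbb{R}^{m+1}_+\setminus\mathbf{0}_{m+1}$. Second, taking $y\to(f_0(x),\dots,f_m(x))$ (approaching the boundary of $\Omega_0$ by letting the $\mathrm{int}\,\mathbb{R}^{m+1}_+$ part shrink to zero, using that the separating inequality passes to the closure) gives $\sum_{k=0}^m\gamma_k f_k(x)=\langle\gamma,(f_0(x),\dots,f_m(x))\rangle\ge 0$ for every $x\in H$, which is precisely (ii).

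\textbf{Main obstacle.} The genuinely load-bearing hypothesis is the convexity of $\Omega_0$, which is what makes the finite-dimensional separation theorem applicable; without it the separation step fails. The delicate technical point is not the separation itself but the passage from the open set $\Omega_0$ to its closure: the inequality $\langle\gamma,y\rangle\ge0$ holds a priori only on $\Omega_0$, and I must argue that $(f_0(x),\dots,f_m(x))\in\overline{\Omega_0}$ so that the inequality is inherited at the actual function values (rather than only at points strictly dominating them). This is exactly where the choice of the \emph{interior} cone $\mathrm{int}\,\mathbb{R}^{m+1}_+$ in the definition of $\Omega_0$ pays off, since each range point is a limit of points of $\Omega_0$. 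I would also double-check that properness of the separating hyperplane is not needed here and that a single nonzero $\gamma\ge 0$ suffices, which it does because $\{\mathbf 0\}$ has empty interior and the bound on $\Omega_0$ alone delivers both the sign and the nontriviality of $\gamma$.
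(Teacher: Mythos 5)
Your proof is correct and follows the same overall strategy as the paper --- translate the inconsistency of the system $f_k(x)<0$, $k=0,\dots,m$, into a disjointness statement about $\Omega_0$, separate in $\mathbb{R}^{m+1}$, extract the sign and nontriviality of $\gamma$, and pass to the limit as the interior perturbation shrinks --- but you set up the separation differently, and one citation needs repair. The paper shows $\neg(\mathrm{i})$ implies $\Omega_0\cap(-\mathrm{int}\,\mathbb{R}^{m+1}_+)=\emptyset$ and separates $\Omega_0$ from the open negative orthant; since both sets are $(m+1)$-dimensional, \autoref{convexsepth} applies verbatim, and the sign condition $\gamma\in\mathbb{R}^{m+1}_+\setminus\{0\}$ drops out immediately from the inequality $\sum_{k}\gamma_k z_k\le 0$ holding on $-\mathrm{int}\,\mathbb{R}^{m+1}_+$. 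You instead observe (correctly) that $\neg(\mathrm{i})$ is equivalent to $\mathbf{0}_{m+1}\notin\Omega_0$ and separate $\Omega_0$ from the singleton $\{\mathbf{0}_{m+1}\}$; this is a cleaner disjointness statement, but a singleton is $0$-dimensional, so \autoref{convexsepth}, which you cite and which requires \emph{both} convex sets to be full-dimensional, does not literally apply. The fix is immediate: invoke \autoref{convexsept} (the relative-interior version already stated in the paper), noting $ri\,\{\mathbf{0}_{m+1}\}=\{\mathbf{0}_{m+1}\}$ and $ri\,\Omega_0=\Omega_0$ because $\Omega_0$ is open --- or simply the elementary separation of a point from an open convex set not containing it. Because your second set carries no cone structure, you must recover $\gamma\ge 0$ separately, which you do correctly via the recession directions $y+te_k\in\Omega_0$, $t>0$; the paper gets this for free from its choice of the second set. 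Your closing limit argument, using $(f_0(x),\dots,f_m(x))+\epsilon(1,\dots,1)\in\Omega_0$ and letting $\epsilon\to 0$, is the direct form of the paper's contrapositive ``choose $C_k$ small enough'' step, and is equally valid; your mutual-exclusivity check likewise matches the paper's immediate direction.
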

\begin{proof}
The implication [not(ii)$\Rightarrow$(i)] is immediate (by contradiction).
To show the implication [not(i)$\Rightarrow$(ii)], assume that (i) does not hold, i.e., the system 
\begin{equation}
\label{slater1h}
f_k(x)<0\ \ \forall\,k\in\{0,...,m\}
\end{equation}
has no solution.
By the definition of \ref{GIS}, the inconsistency of the system \eqref{slater1h} implies that 
\begin{equation} 
\label{separationzeroh}
\Omega_0\cap\ (-\text{int} \mathbb{R}_+^{m+1})=\emptyset.
\end{equation}
To see this, suppose by contrary,  that there exists $y\in\Omega_{0}\cap (-\text{int}\mathbb{R}_{+}^{m+1})$. By the definition of $\Omega_{0}$ (with $f_{k}$, $k=0,...,m$, defined by \eqref{eq:functionals}), there exist $x_{0}\in H$,  $C_{0}, C_{1}\in\text{int}\mathbb{R}_{+}^{m+1}$ such that
$$
y_{0}=(f_{0}(x_{0}),...,f_{m}(x_{0}))+C_{0}=-C_{1}\in
(-\text{int}\mathbb{R}_{+}^{m+1}),
$$
i.e., $(f_{0}(x_{0}),...,f_{m}(x_{0}))=-C_{0}-C_{1}\in(-\text{int}\mathbb{R}_{+}^{m+1})$ contradictory to \eqref{slater1h}. This proves \eqref{separationzeroh}.

 Since $\text{int}\Omega_0$ and $-\text{int}\mathbb{R}_{+}^{m+1}$ are $(m+1)$-dimensional sets in $\mathbb{R}^{m+1}$, non-empty and convex, by \eqref{separationzeroh}, 
$$ 
int\,\Omega_0\cap\ \,(-\text{int} \mathbb{R}_+^{m+1})=\emptyset,
$$
 we can apply \autoref{convexsepth}. 
 So, there exists a hyperplane which separates $\Omega_0$ and $-\text{int}\mathbb{R}_{+}^{m+1}$ properly, i.e. there exists $\gamma=(\gamma_0,...,\gamma_m)\in\mathbb{R}^{m+1}\backslash 0$ such that
\begin{equation} 
\label{separationh}
\sum_{k=0}^{m} \gamma_{k}y_{k}\ge 0\ \ \forall\ y=
(y_{0},...,y_{m})\in\Omega_{0}
\end{equation}
and $\sum_{k=0}^{m} \gamma_{k}z_{k}\le 0\ \ \forall\ z=(z_{0},...,z_{m})\in(-int \mathbb{R}_+^{m+1}).$ This latter inequality implies that it must be $\gamma\in\mathbb{R}_{+}^{m+1}\backslash 0$.

Consequently, by the definition of  $\Omega_{0}$, 
$$
y=(f_{0}(x),...,f_{m}(x))+C\in\Omega_{0},\ \  C=(C _{0},...,C_{m})\in\text{int} \mathbb{R}_{+}^{m+1},\ \ x\in H
$$
and the formula \eqref{separationh}, we get 
\begin{equation} 
\label{separation1h}
\sum_{k=0}^{m} \gamma_{k}(f_{k}(x)+C_{k})\ge 0\ \ \forall\ x\in H, \ \ C_k\in\text{int}\mathbb{R}_+\ k=0,...,m.
\end{equation}
We show that  it must be
\begin{equation} 
\label{separation2h}
\sum_{k=0}^{m} \gamma_{k}f_{k}(x)\ge 0\ \ \forall\ x\in H.
\end{equation}
Otherwise, 
$
\sum_{k=0}^{m} \gamma_{k}f_{k}(\bar{x})< 0\ \ \text{for some  }\ \bar{x}\in H,$ and it would be possible to choose $C\in\text{int}\mathbb{R}_{+}^{m+1}$ with components $C_{k}>0$ small enough so as 
$$
\sum_{k=0}^{m} \gamma_{k}(f_{k}(\bar{x})+C_{k})< 0
$$
which would contradict \eqref{separationh} since $(f_{0}(\bar{x})+\ell_{0},...,f_{m}(\bar{x})+\ell_{m})\in\Omega_{0}$ for any $\ell=(\ell_{0},...,\ell_{m})\in \text{int}\mathbb{R}^{m+1}_+\ $.
Thus, \eqref{separation2h} holds and it proves $(ii)$.
\end{proof}

\renewcommand{\theenumi}{\roman{enumi}}%

In the following, we exploit \autoref{th:shilbert} to get necessary and sufficient optimality conditions for general \ref{QCQP1}.

\begin{theorem}
\label{globalminchar}
Let \autoref{preliminaryassumpt} hold and
$x^*$ be a global minimizer of \ref{QCQP1}.
Define
 $$
 f_0(x):=J(x)-J(x^*)=\langle x, A_Jx\rangle+\langle b_J,x\rangle-\langle x^*, A_Jx^*\rangle-2\langle b_J, x^*\rangle.
 $$ 
 Consider the collection of quadratic functionals formed by $f_0(x)$ and the constraints of \eqref{QCQP1} $f_i(x)$, $i=1,...,m$.
Denote \ref{GIS} with $\Omega_0$ and let $\Omega_0$ be convex.
The following Fritz-John conditions are necessary for optimality, i.e.
there exists a vector $(\gamma_0,...,\gamma_m)\in\mathbb{R}^{m+1}_+\backslash\ 0$ such that
\begin{equation}
    \label{fritzjohn}
    \begin{split}
        &(i)\ \ \ \nabla(\gamma_0J+\sum\limits_{k=1}^m\gamma_k f_k)(x^*)=0,\\
        &(ii)\ \ \gamma_kf_k(x^*)=0\ \ k\in\{1,...,m\},\\
        &(iii)\ \gamma_{0}A_J+\sum\limits_{k=1}^m\gamma_kA_k \text{ is non negative}.
    \end{split}
\end{equation}
Moreover, if there exists a point $x_0\in H$ such that \begin{equation} \label{slater}
f_k(x_0)<0\ \ \ \forall\,k\in\{1,...,m\},
\end{equation}
then 
there exists a vector $(\gamma_1,...,\gamma_m)\in\mathbb{R}^m_+\backslash0$ such that
\begin{equation}
    \label{kkt}
    \tag{KKT}
    \begin{split}
        &(i)\ \ \ \nabla(J+\sum\limits_{k=1}^m\gamma_k f_k)(x^*)=0,\\
        &(ii)\ \ \gamma_kf_k(x^*)=0\ \ k\in\{1,...,m\},\\
        &(iii)\ A_J+\sum\limits_{k=1}^m\gamma_kA_k \text{ is non negative}
    \end{split}
\end{equation}
are necessary for optimality. Given a feasible $x^{*}$ for problem \ref{QCQP1}, if \eqref{slater} holds, the conditions \eqref{kkt} are also sufficient for global optimality of $x^*$.
\end{theorem}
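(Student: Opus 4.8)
The spine of the argument is the generalized S-Lemma, \autoref{th:shilbert}, which is applicable precisely because $\Omega_0$ is assumed convex; once it is invoked, everything else is bookkeeping. I would first establish Fritz-John necessity. Since $f_0 := J - J(x^*)$ together with $f_1,\dots,f_m$ are quadratic functionals of the form \eqref{eq:functionals}, \autoref{th:shilbert} applies. As $x^*$ is a global minimizer, there is no $\bar{x}$ that is both feasible ($f_k(\bar{x})\le 0$, $k\ge 1$) and improving ($J(\bar{x})<J(x^*)$, i.e. $f_0(\bar{x})<0$); a fortiori there is no $\bar{x}$ with $f_k(\bar{x})<0$ for all $k\in\{0,\dots,m\}$, so alternative (i) of \autoref{th:shilbert} fails. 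Hence alternative (ii) holds: there is $\gamma=(\gamma_0,\dots,\gamma_m)\in\mathbb{R}^{m+1}_+\setminus\{0\}$ with $\sum_{k=0}^m\gamma_k f_k(x)\ge 0$ for all $x\in H$. From this single scalar inequality I would extract the three conditions. Setting $\Phi(x):=\sum_{k=0}^m\gamma_k f_k(x)\ge 0$ and evaluating at $x^*$ gives $\sum_{k=1}^m\gamma_k f_k(x^*)\ge 0$; feasibility and $\gamma_k\ge 0$ force $\sum_{k=1}^m\gamma_k f_k(x^*)\le 0$, so every term vanishes, which is complementarity (ii). Consequently $\Phi(x^*)=0$, so $x^*$ globally minimizes the differentiable quadratic $\Phi$; the first-order condition $\nabla\Phi(x^*)=0$ is stationarity (i), the additive constant $-\gamma_0 J(x^*)$ being irrelevant to the gradient. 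Finally, expanding $\Phi(x^*+h)=\Phi(x^*)+\langle h,(\gamma_0 A_J+\sum_{k=1}^m\gamma_k A_k)h\rangle$ and using $\Phi\ge 0$ yields non-negativity (iii).

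Next, under Slater \eqref{slater} I would upgrade Fritz-John to KKT by showing $\gamma_0>0$. If $\gamma_0=0$, then condition (ii) of \eqref{fritzjohn} reads $\sum_{k=1}^m\gamma_k f_k(x)\ge 0$ for all $x$ with $(\gamma_1,\dots,\gamma_m)\neq 0$; evaluating at the Slater point $x_0$, where every $f_k(x_0)<0$, produces a strictly negative value, a contradiction. Hence $\gamma_0>0$, and dividing all the Fritz-John data by $\gamma_0$ produces multipliers satisfying \eqref{kkt}.

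For sufficiency, let $x^*$ be feasible and satisfy \eqref{kkt}, and put $L:=J+\sum_{k=1}^m\gamma_k f_k$. By (iii) its quadratic part $A_J+\sum_{k=1}^m\gamma_k A_k$ is non-negative, so by Proposition 3.71 of \cite{bonnans2013perturbation} $L$ is convex; by (i), $\nabla L(x^*)=0$, so Fermat's rule (\autoref{fermat}) makes $x^*$ a global minimizer of $L$ over $H$. Then for any feasible $x$, complementarity (ii) gives $L(x^*)=J(x^*)$, while $\gamma_k\ge 0$ and $f_k(x)\le 0$ give $L(x)\le J(x)$, so that $J(x)\ge L(x)\ge L(x^*)=J(x^*)$; thus $x^*$ is globally optimal.

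The main obstacle, everything else being routine once \autoref{th:shilbert} is available, is the clean passage from the single scalar inequality in alternative (ii) to the three separate KKT ingredients. In particular, the first-order (stationarity) and curvature (non-negativity) conclusions must be argued directly from the quadratic expansion of $\Phi$ in the infinite-dimensional setting, rather than by invoking finite-dimensional Hessian facts, and one must be careful that the convexity of $L$ used in the sufficiency direction is exactly what condition (iii) delivers.
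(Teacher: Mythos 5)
Your proposal is correct and follows essentially the same route as the paper: both invoke the generalized S-Lemma (\autoref{th:shilbert}) via the assumed convexity of $\Omega_0$ to get the nonnegative aggregate $\sum_{k=0}^m\gamma_k f_k\ge 0$, extract complementarity by evaluating at $x^*$, upgrade Fritz-John to KKT by ruling out $\gamma_0=0$ with the Slater point, and prove sufficiency through convexity of the Lagrangian (from condition (iii)) together with Fermat's rule and the chain $J(x)\ge L(x,\gamma)\ge L(x^*,\gamma)=J(x^*)$. The only cosmetic difference is the order in the Fritz-John step: you obtain stationarity (i) directly from the first-order condition at the global minimum of $\Phi$ and then read off (iii) from the quadratic expansion with vanishing linear term, whereas the paper first derives (iii) by noting that an indefinite $\tilde{A}(\gamma)$ would force $\inf_{x\in H}\tilde{L}(x)=-\infty$, concludes $\tilde{L}$ is convex, and only then applies the convex Fermat rule to get (i) --- both orderings are valid.
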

\begin{proof}
Let $f_0(x):=J(x)-J(x^*)$. Since $x^*$ is a global minimizer of \ref{QCQP1}, $f_0(x)\ge0$ $\forall\,x$ feasible for \ref{QCQP1}.
Hence, the system $f_k(x)<0$ $k=0,...,m$ has no solution.
By \autoref{th:shilbert}, there exists  $(\gamma_0,...,\gamma_m)\in\mathbb{R}^{m+1}_+\backslash 0$ such that 
\begin{equation} \label{alternative}
\gamma_0f_0(x)+\sum\limits_{k=1}^m\gamma_kf_k(x)\ge0\ \ \text{for all }x\in H.
\end{equation}
In particular, for $x=x^*$, we have $\sum\limits_{k=1}^m\gamma_kf_k(x^*)\ge0$. 
Since $\gamma_kf_k(x^*)\le0$ $\forall\,k\in \{1,...,m\}$, it must be $\gamma_kf_k(x^*)=0$ $\forall\,k\in \{1,...,m\}$ which proves (ii) of \eqref{fritzjohn}.
Moreover, by \eqref{alternative} and (ii) of \eqref{fritzjohn}, for all $x\in H$
\begin{equation}
    \label{attmin}
        \Tilde{L}(x):=\gamma_0J(x)+\sum\limits_{k=1}^m\gamma_kf_k(x)\ge \gamma_0J(x^*)=\Tilde{L}(x^*).
\end{equation}
Hence $\Tilde{L}(x)$ attains its minimum over $H$ at $x^*$.
Notice that we can rewrite $\Tilde{L}(x)$ as
\begin{equation}
\label{fjqcqp}
\Tilde{L}(x):=\gamma_0J(x)+\sum\limits_{k=1}^m\gamma_kf_k(x)=\langle x,\Tilde{A}(\gamma)x\rangle+2\langle \Tilde{b}(\gamma),x\rangle+\tilde{c}(\gamma)
\end{equation}
with $\Tilde{A}(\gamma)=\gamma_0A_J+\sum\limits_{k=1}^m\gamma_kA_k$,  $\Tilde{b}(\gamma)=\gamma_0b_J+\sum\limits_{k=1}^m\gamma_kb_k$ and  $\Tilde{c}(\gamma)=\gamma_0c_J+\sum\limits_{k=1}^m\gamma_kc_k$.

The operator $\Tilde{A}(\gamma)$ must be non negative, otherwise we would have $min_{x\in H}\,\Tilde{L}(x)=-\infty$, which is contradiction with \eqref{attmin}. 
Hence, $\Tilde{A}(\gamma)$ is non negative, i.e.  condition (iii) of \eqref{fritzjohn} holds and $\Tilde{L}(x)$ is convex with respect to $x$.
We can apply \autoref{fermat} for the convex and twice continuously differentiable function $\Tilde{L}(x)$. The optimality condition $\nabla_x\Tilde{L}(x^*)=0$ is equivalent to 
the conditions (i) of \eqref{fritzjohn}. 
The proof of the first part of the theorem is finished.

Suppose now that \eqref{slater} holds, i.e., there exists a point $x_0$ such that ${f_k(x_0)<0}$ $\forall\,k=1,...,m$. 
If it were $\gamma_0=0$, then by \eqref{alternative}, it would be $\sum\limits_{k=1}^m\gamma_kf_k(x)\ge0$ for all $x\in\mathbb{R}^n$, which would contradict \eqref{slater}. Hence $\gamma_0>0$ and the Fritz-John conditions becomes the KKT condition, i.e. \eqref{kkt} holds.

To complete the proof, we show that conditions \eqref{kkt} are also sufficient for optimality. 
Assume that there exists  $x^*\in H$  which is feasible to \ref{QCQP1} and $(\gamma_1,...,\gamma_m)\in\mathbb{R}^m_+\backslash0$ such that \eqref{kkt} holds.
The Lagrangian for \ref{QCQP1} is:
\begin{equation}
\label{lagqcqp}
L(x,\gamma):=J(x)+\sum\limits_{k=1}^m\gamma_kf_k(x)=\langle x,A(\gamma)x\rangle+2\langle b(\gamma),x\rangle+c(\gamma)
\end{equation}
with $A(\gamma)=A_J+\sum\limits_{k=1}^m\gamma_kA_k$,  $b(\gamma)=b_J+\sum\limits_{k=1}^m\gamma_kb_k$ and  $c(\gamma)=c_J+\sum\limits_{k=1}^m\gamma_kc_k$.

Notice that the Lagrangian $L(x,\gamma)$ is convex with respect to $x$, since  $A(\gamma)=A_J+\sum\limits_{k=1}^m\gamma_kA_k$ is non negative by \eqref{kkt}. 
Hence $x^*$ such that ${\nabla_x L(x^*)=0}$ is the minimum of $L(x,\gamma)$ for $\gamma$ fixed, by \autoref{fermat}.

By \eqref{kkt}, $\gamma_kf_k(x^*)=0$ for $k=1,...,m$. We have
\begin{equation}
\label{lagmin}
J(x)+\sum\limits_{k=1}^m\gamma_kf_k(x)\ge J(x^*)+\sum\limits_{k=1}^m\gamma_kf_k(x^*)=J(x^*)\ \ \forall\,x\in\mathbb{R}^n.
\end{equation}
For any $x$ feasible for \ref{QCQP1}, $f_k(x)\le0$, $k+1,...,m$, and hence 
\begin{equation}
    \label{sowehave}
    J(x)\ge J(x)+\sum\limits_{k=1}^m\gamma_kf_k(x)
\end{equation}
Combining \eqref{lagmin} and \eqref{sowehave}, for any $x$ feasible for \ref{QCQP1}, we have
\begin{equation}
    \label{xglobmin}
    J(x)\ge J(x^*)
\end{equation}
which proves that $x^*$ is a global minimum for \ref{QCQP1}.

\end{proof} 

\begin{example}
Consider a convex \eqref{QCQP1} with $m$ constraints and let $x^*$ be the global minimum.
It is possible to show that \ref{GIS} is convex. 
Fix $\lambda\in(0,1]$. Take $v:=(v_0,...,v_m)\in \text{\ref{GIS}}$, $w:=(w_0,...,w_m)\in\text{\ref{GIS}}$,  i.e. there exists $x_v,x_w\in H$ such that
\begin{equation}
    \label{eq: conv_vw}
    f_k(x_v)<v_k,\quad f_k(x_w)<w_k,\quad \forall\,k\in\{0,...,m\}.
\end{equation}
We show that $\lambda v+(1-\lambda)w\in \text{\ref{GIS}}$, i.e. there exists $\Tilde{x}\in H$ such that
\begin{equation}
    \label{eq: conv_proof}
    f_k(\Tilde{x})\le \lambda f_k( x_v)+(1-\lambda)f_k(x_w)<\lambda v_k+(1-\lambda)w_k,\quad \forall\,k\in\{0,...,m\},
    \end{equation}
where the strict inequality is a consequence of \eqref{eq: conv_vw}.
We choose ${\Tilde{x}=\lambda x_v+(1-\lambda)x_w}$.
By the convexity of functions $f_k(\cdot)$, for all $k\in\{0,...,m\}$
$$
f_k(\Tilde{x})=f_k(\lambda x_v+(1-\lambda)x_w)\le \lambda f_k( x_v)+(1-\lambda)f_k(x_w). 
$$
So, \eqref{eq: conv_proof} holds and \ref{GIS} is convex. 
By Theorem \ref{globalminchar}, there exists a vector $(\gamma_0,...,\gamma_m)\in\mathbb{R}^{m+1}_+\backslash\ 0$ such that the conditions \eqref{fritzjohn} are necessary for optimality. 
Moreover, if there exists a point $x_0\in H$ such that \eqref{slater} holds, then there exists a vector $(\gamma_1,...,\gamma_m)\in\mathbb{R}^m_+\backslash0$ such that
\eqref{kkt} are necessary and sufficient for global optimality.\qed
\end{example}

\begin{example}
Consider problem \eqref{QCQP1} with one constraint, $m=1$, defined on a real Hilbert space $H$ of dimension $\text{dim } H$ such that $3\le\text{dim } H\le\infty$. Assume that there exists $\gamma_0,\gamma_1\in\mathbb{R}$ such that $(\gamma_0A_0+\gamma_1A_1)$ is positive.
Then, \ref{GIS} is convex by \cite{continopolyak}, Theorem 4.1. If there exists a global minimum $x^*$ of the considered \eqref{QCQP1} with $m=1$, we can apply Theorem \ref{globalminchar}, i.e. there exists a vector $(\gamma_0,\gamma_1)\in\mathbb{R}^{2}_+\backslash\ 0$ such that the conditions \eqref{fritzjohn} are necessary for optimality. 
Moreover, if there exists a point $x_0\in H$ such that \eqref{slater} holds, then there exists a scalar $\gamma_1>0$ such that
\eqref{kkt} are necessary and sufficient for global optimality.\qed
\end{example}

\begin{remark}
In Theorem \ref{globalminchar}, we use \eqref{slater} as a "constraint qualification", in the sense that \eqref{slater} allow us to take $\gamma_0>0$ in \eqref{fritzjohn}.
In analogy with convex nonlinear programming, it is possible to replace \eqref{slater} with other constraint qualification. 
For example, consider convex \ref{QCQP1} with linear constraints only (Convex QP). 
By Theorem 3.118 of \cite{bonnans2013perturbation}, if $x_1$ is a local solution of a Convex QP, there exists a vector $(\gamma_1,...,\gamma_m)\in\mathbb{R}_{+}^m\setminus0$ such that conditions $(i)$ and $(ii)$ of \eqref{kkt} hold in $x_1$. 
Since $(iii)$ of \eqref{kkt} holds automatically, Theorem \ref{globalminchar} can be proved even without assuming \eqref{slater} for Convex QP. \qed
\end{remark}
\section{Global minima characterization for (S-QCQP)}
\label{global:characterisation1}

In the present section we use the results of Section \ref{global:characterisation} to provide \eqref{kkt} characterization of global minima for \ref{QCQP2}. The main result of this section is Theorem \ref{omega0isconvex}.

Let $x^*$ be the global minimum of \ref{QCQP2}.
As in \autoref{globalminchar}, we use the notation
 $$
 f_0(x):=J(x)-J(x^*)=a_J\|x\|^2+\langle b_J,x\rangle-a_J\|x^*\|^2-2\langle b_J,x^*\rangle
 $$ 

Also in the case of \ref{QCQP2}, \ref{GIS} takes the form
\begin{equation*}
    \Omega_0:=\{(f_0(x),f_1(x),..,f_m(x))|x\in H\}+ int\mathbb{R}^{m+1}_+
\end{equation*}
where 
$$
f_k(x):=a_k\|x\|^2+2\langle b_k,x\rangle+c_k\ \ k=0,...,m
$$
with $a_k\in\mathbb{R}$.

\begin{theorem}
\label{omega0isconvex}
Consider problem \ref{QCQP2} with $m$ constraints. Let the Hilbert space $H$ be such that one of the following holds:
\begin{enumerate}
    \item H is infinite dimensional,
    \item H has $n$-dimensional, $n\in\mathbb{N}$. In this case, any maximally linearly independent subset of the system $\langle b_k,x\rangle=0$, $k=0,...,m$ has cardinality $\Bar{m}<n$. 
\end{enumerate}
Then, \ref{GIS}, denoted with $\Omega_0$, is convex.
\end{theorem}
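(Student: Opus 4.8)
The plan is to verify convexity directly from the strict description of $\Omega_0$: a point $v=(v_0,\dots,v_m)$ lies in \ref{GIS} precisely when there is an $x_v\in H$ with $f_k(x_v)<v_k$ for every $k$. So I would fix $\lambda\in(0,1)$, take $v,w\in\Omega_0$ with witnesses $x_v,x_w$, and try to produce $\tilde x\in H$ with $f_k(\tilde x)<\lambda v_k+(1-\lambda)w_k$ for all $k$; since $f_k(x_v)<v_k$ and $f_k(x_w)<w_k$, it suffices to find $\tilde x$ achieving $f_k(\tilde x)\le\lambda f_k(x_v)+(1-\lambda)f_k(x_w)$ for every $k$. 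The decisive structural feature of \ref{QCQP2} is that all $f_k$ share the same quadratic part $\|x\|^2$: writing $f_k(x)=a_k\|x\|^2+2\langle b_k,x\rangle+c_k$, a single vector controls all $m+1$ functionals simultaneously once its norm and its $m+1$ linear projections $\langle b_k,\cdot\rangle$ are fixed.

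Next I would split the construction of $\tilde x$ into a linear part and a norm correction. Setting $x_0:=\lambda x_v+(1-\lambda)x_w$ matches the linear terms exactly, $\langle b_k,x_0\rangle=\lambda\langle b_k,x_v\rangle+(1-\lambda)\langle b_k,x_w\rangle$, while a direct computation gives $\|x_0\|^2=T-\delta$ with $T:=\lambda\|x_v\|^2+(1-\lambda)\|x_w\|^2$ and deficit $\delta:=\lambda(1-\lambda)\|x_v-x_w\|^2\ge0$. If I can find $\tilde x$ with the same linear projections as $x_0$ and with $\|\tilde x\|^2=T$ exactly, then $f_k(\tilde x)=a_kT+2\langle b_k,x_0\rangle+c_k=\lambda f_k(x_v)+(1-\lambda)f_k(x_w)$ for all $k$ at once --- and this equality holds regardless of the sign of $a_k$, which is precisely what makes the possibly nonconvex case work. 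To raise the norm while freezing the linear data, I would move inside the closed subspace $V:=\bigcap_{k=0}^m\{x\in H\mid\langle b_k,x\rangle=0\}$: taking any unit vector $u\in V$, the choice $\tilde x=x_0+tu$ gives $\langle b_k,\tilde x\rangle=\langle b_k,x_0\rangle$ and leads to the scalar equation $t^2+2t\langle x_0,u\rangle-\delta=0$, whose discriminant $\langle x_0,u\rangle^2+\delta$ is nonnegative, so a real root $t$ exists.

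The one thing this argument needs, and hence the main obstacle, is that $V\neq\{0\}$, so that a nonzero direction $u$ is available; this is exactly where the two hypotheses on $\dim H$ enter. Discarding redundant (linearly dependent) functionals, $V$ is the intersection of the kernels of a maximal linearly independent subset of $\{b_0,\dots,b_m\}$ of some cardinality $\bar m\le m+1$, so by \autoref{th:deu712} one has $\text{codim}\,V=\bar m$. In the finite-dimensional case (2), $\dim V=n-\bar m\ge1$ by the assumption $\bar m<n$; in the infinite-dimensional case (1), $V$ has finite codimension inside an infinite-dimensional space and is therefore itself infinite-dimensional. Either way $V\neq\{0\}$, the direction $u$ exists, and the construction goes through, yielding $f_k(\tilde x)=\lambda f_k(x_v)+(1-\lambda)f_k(x_w)<\lambda v_k+(1-\lambda)w_k$ for all $k$, so $\lambda v+(1-\lambda)w\in\Omega_0$ and $\Omega_0$ is convex. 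The degenerate case $x_v=x_w$, where $\delta=0$, is covered by $t=0$, i.e. $\tilde x=x_0$.
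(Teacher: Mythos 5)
Your proposal is correct and follows essentially the same route as the paper's proof: writing $\tilde x=x_0+tu$ with $x_0=\lambda x_v+(1-\lambda)x_w$ and $u$ a nonzero direction in $V=\bigcap_{k}\{x\in H\mid\langle b_k,x\rangle=0\}$ (with $V\neq\{0\}$ secured by the same codimension argument via Theorem~\ref{th:deu712}) and solving the scalar quadratic $t^2+2t\langle x_0,u\rangle-\delta=0$ with nonnegative discriminant is exactly the paper's intersection of the sphere $\mathcal{S}(\lambda)$ with the line $\alpha y_v$ after its change of variable $y=\tilde x-x_0$, using the same norm identity $\|x_0\|^2=\lambda\|x_v\|^2+(1-\lambda)\|x_w\|^2-\lambda(1-\lambda)\|x_v-x_w\|^2$. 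The only cosmetic differences are that you absorb the degenerate case $x_v=x_w$ via $t=0$ where the paper treats it separately, and you conclude $V\neq\{0\}$ in the infinite-dimensional case slightly more directly (finite codimension forces $V$ infinite-dimensional) rather than by the paper's contradiction argument.
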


\begin{proof}
     In order to show that $\Omega_0\subset\mathbb{R}^{m+1}$ is convex, take any ${v:=(v_0,...,v_m)}$, ${w:=(w_0,...,w_m)\in \Omega_0}$ and $\lambda\in(0,1)$. 
There exist $x_v,x_w\in H$ such that
\begin{equation}
\label{eeeh_blue}
    f_k(x_v)<v_k\ \ \text{and }
   f_k(x_w)<w_k\ \ \forall\,k\in\{0,...,m\}.
\end{equation}

Consider the convex combination $\lambda v + (1-\lambda)w$. Let $(\lambda v + (1-\lambda)w)_k$ be the $k$-th component of $\lambda v + (1-\lambda)w$.
By \eqref{eeeh_blue}, we have 
\begin{equation}
    \label{fromlesstole_blue}
    \lambda f_k(x_v)+(1-\lambda)f_k(x_w)<(\lambda v + (1-\lambda)w)_k\ \ \forall\,k\in\{0,...,m\}.
\end{equation}

In order to prove that $\Omega_0$ is convex, we show that the convex combination
$\lambda v + (1-\lambda)w$  belongs to $\Omega_0$, i.e. there exists $\Tilde{x}$ such that $\forall\,k\in\{0,...,m\}$
\begin{equation}
    \label{needtoprove_blue}
    f_k(\Tilde{x})<(\lambda v + (1-\lambda)w)_k.
\end{equation}

Formulas \eqref{fromlesstole_blue} and \eqref{needtoprove_blue} together imply that $\Omega_0$ is convex if there exists $\Tilde{x}\in H$ such that $\forall\,k\in\{0,...,m\}$
\begin{equation}
    \label{needtoprove2_blue}
    f_k(\Tilde{x})\le\lambda f_k(x_v)+(1-\lambda)f_k(x_w).
\end{equation}

Note that if $x_v=x_w$, then \eqref{needtoprove2_blue} trivially holds for $\Tilde{x}=x_v=x_w$. From now on, we assume that $x_{v}$ and $x_{w}$ are distinct vectors, $x_v\neq x_w$.

Let us consider
\begin{equation}
    \label{equality_blue}
    \mathcal{S}(\lambda):=\{x\in H\ |\ \|x\|^2=\lambda \|x_v\|^2 + (1-\lambda)\|x_w\|^2 \},
\end{equation}
i.e. a sphere centered at zero with radius $\sqrt{\lambda \|x_v\|^2 + (1-\lambda)\|x_w\|^2}$ . In particular, when both $x_{v}=x_{w}=0$ the set $\mathcal{S}(\lambda)$ reduces to $\{0\}$, but this is impossible, since we assumed that $x_{v}$ and $x_{w}$ are distinct vectors.

 The idea of proving the existence of $\tilde{x}$ satisfying \eqref{needtoprove_blue} is based on the following observations:
\begin{description}
\item[--] any $\Tilde{x}\in\mathcal{S}(\lambda)$ satisfies \eqref{needtoprove_blue} if $\forall\,k\in\{0,...,m\}$:
\begin{equation}
\label{eq23_blue}
   \begin{split}
   &f_k(\Tilde{x})\le\lambda f_k(x_v)+(1-\lambda)f_k(x_w)<\lambda v_k+(1-\lambda)w_k.\\
    &a_k \|\Tilde{x}\|^2+2\langle b_k,\Tilde{x}\rangle+c_k\le a_k(\lambda \|x_v\|^2 + (1-\lambda)\|x_w\|^2)+2\langle b_k,(\lambda x_v + (1-\lambda)x_w)\rangle+c_k,
   \end{split}
\end{equation}
\item[--] to ensure \eqref{eq23_blue}, we need to show that we can choose $\Tilde{x}\in\mathcal{S}(\lambda)$ satisfying \eqref{equality_blue} and such that $\forall\,k\in\{0,...,m\}$

\begin{equation}
\label{syyy_blue}
   \begin{split}
       & \langle b_k,\Tilde{x}\rangle\le  \langle b_k,(\lambda x_v + (1-\lambda)x_w)\rangle,\ i.e.,\\
       &\langle b_k,(\Tilde{x}-(\lambda x_v + (1-\lambda)x_w))\rangle\le0.
   \end{split}
\end{equation}
\end{description}
In \eqref{syyy_blue}, we replace the inequality with the equality. We get
\begin{equation}
\label{equality_syst_blue}
       \langle b_k,(\Tilde{x}-(\lambda x_v + (1-\lambda)x_w))\rangle=0\ \forall\,k\in\{0,...,m\}.
\end{equation}

Let $V$ be the set of the solutions of \eqref{equality_syst_blue}.

Assume that 
\begin{equation} 
\label{sphere_blue} 
V\cap\mathcal{S}(\lambda)\neq\emptyset
\end{equation}
and take $\Tilde{x}\in V\cap\mathcal{S}(\lambda) $.
Then, we have 
for all $k\in\{0,...,m\}$
\begin{equation}
    \label{endproof1_blue1}
    \begin{split}
        &f_k(\Tilde{x})=a_k \|\Tilde{x}\|^2+2\langle b_k,\Tilde{x}\rangle+c_k=\\
        &a_k(\lambda \|x_v\|^2 + (1-\lambda)\|x_w\|^2)+2\langle{b_k},(\lambda x_v + (1-\lambda)x_w)\rangle+c_k=\\
       &\lambda( a_k \|x_v\|^2+2\langle b_k,x_v\rangle+c_k)+(1-\lambda) (a_k \|x_w\|^2+2\langle b_k,x_w\rangle+c_k)=\\
       &\lambda f_k(x_v)+(1-\lambda) f_k(x_w),
    \end{split}
\end{equation}
which proves Theorem \ref{omega0isconvex}.
Hence, in order to finish the proof, we need to show that \eqref{sphere_blue} holds, i.e. $V\cap\mathcal{S}(\lambda)\neq\emptyset$.
We change variable by setting 
\begin{equation}
    \label{change_var}
    y=\Tilde{x}-(\lambda x_v + (1-\lambda)x_w).
\end{equation}

The sets $\mathcal{S}(\lambda)$ and $V$ become
\begin{equation}
    \label{real_system}
    \left\{\begin{aligned}
        &\mathcal{S}(\lambda):=\{y\in H\ |\ \|y+(\lambda x_v + (1-\lambda)x_w)\|^2=\lambda \|x_v\|^2 + (1-\lambda)\|x_w\|^2 \},\\
        &V:=\text{ solution set of } \langle b_k,y\rangle=0\ \ \forall\,k\in\{0,...,m\}.
    \end{aligned}\right.
\end{equation}
Notice that, in the space of the variable $y\in H$, $\mathcal{S}(\lambda)$ is a sphere centered in $(-\lambda x_v - (1-\lambda)x_w)$ of radius
$\sqrt{\|x_v\|^2 + (1-\lambda)\|x_w\|^2}$, while $V$ is the space of solutions of the homogeneous system of equations of the form
\begin{equation}
    \label{linsystem_blue}
        \langle b_k,y \rangle = 0\ \ k\in\{0,...,m\}
\end{equation}

We choose a maximally linearly independent
subset of cardinality $\Bar{m}$ of the vectors $b_k$, $k\in\{0,...,m\}$. Without loss of generality, we will indicate the vectors in the maximally linearly independent subset with  $b_k$, $k\in\{1,...,\bar{m}\}$.
Then, the solution set $V$ of system \eqref{linsystem_blue}, coincides with the kernel of the linear continuous operator $\mathcal{B} : H \rightarrow \mathbb{R}^{\bar{m}}$, defined as
\begin{equation}
    \label{kernel_linsyst}
    \mathcal{B}y:=\begin{bmatrix}&\langle b_1,y\rangle\\ &\vdots \\ & \langle b_{\bar{m}},y\rangle
    \end{bmatrix}.
\end{equation}

The operators is continuous because, by the Riesz theorem, each functional $\langle b_k, y\rangle$, $k=1,...,\bar{m}$ is continuous on $H$.
Notice that the range of the operator $\mathcal{B}$ is finite-dimensional, i.e.,
$\text{range}(\mathcal{B}) = \mathbb{R}^{\Bar{m}}$.

We can write $\text{ker }\mathcal{B}=V= \bigcap\limits_{k=1}^{\bar{m}}\{x \in H\ |\ \langle b_k, x\rangle = 0\},$ 
for the linearly independent set $\{b_l,b_2, \cdots,b_{\bar{m}} \}$ in $H$.
By \eqref{eq:range+ker}, we have
\begin{equation}
    \label{eq:range+kerB}
    H=\text{ker }\mathcal{B}\oplus\text{range }\mathcal{B}^*=V\oplus\text{range }\mathcal{B}^*.
\end{equation}

By Theorem \ref{th:deu712}, $V$ is a subspace of codimension $\bar{m}$. 
By the definition of codimension, $\text{dim range }\mathcal{B}^*=\bar{m}$.  

We prove by contradiction that $V\neq\{0\}$, in the case that $H$ is infinite dimensional, assumption $i.$, and in the case where $H$ is finite dimensional with $n>\bar{m}$, assumption $ii.$

By \eqref{eq:range+kerB}, if $V=\{0\}$
, $\forall\,y\in H$ and for $0\in H$, there exists $\bar{y}\in\text{range }\mathcal{B}^* $ such that $y=\bar{y}+0$.
Then a basis of $\mathcal{B}^*$, which consist in a set of $\bar{m}$ maximally linearly independent vectors by Definition \ref{def:Schauderbasis}, is also a basis for $H$. This is a contradiction with $V=\{0\}$ in case that $H$ is infinite dimensional, since $\bar{m}<\infty$, see Example 2.27 of \cite{bauschke2017convex}, and also in case $H$ is $n$ dimensional, because $\bar{m}<n$, by assumption.

Hence, there exists a point $y_v\in V$ of norm $\|y_v\|>0$. 

We can prove that there exists $\Tilde{y}\in V\cap \mathcal{S}(\lambda)$ analitically, as follows.
We want to find $\alpha\in\mathbb{R}$ such that $\Tilde{y}:=\alpha y_v\in V\cap \mathcal{S}(\lambda)$.

We always have $\Tilde{y}:=\alpha y_v\in V$, since, $\forall\, k\in\{1,...,\Bar{m}\}$, ${\langle b_k, \Tilde{y}\rangle =\alpha\langle b_k, y_v\rangle = 0}$, in fact $V$ is a subspace and every subspace is a cone.

We recall that $\mathcal{S}(\lambda)$ is the sphere defined in \eqref{real_system}. Hence, in order to prove that $\Tilde{y}\in \mathcal{S}(\lambda)$, there should exists $\alpha\in\mathbb{R}$ such that

\begin{equation}
\label{provethis_blue1}
\begin{split}
&\Tilde{y}=\alpha y_v\in\mathcal{S}(\lambda),\ i.e.\\
    &\|\alpha y_v+\lambda x_v+(1-\lambda)x_w\|^2=\lambda \|x_v\|^2 + (1-\lambda)\|x_w\|^2.
\end{split}
\end{equation}
Consequently, we need to find $\alpha\in\mathbb{R}$ such that
\begin{equation}
\label{provethis2_blue1}
    \begin{split}
       &\|\alpha y_v+\lambda x_v+(1-\lambda)x_w\|^2=\\
    &=\alpha^2\| y_v\|^2+\|\lambda x_v+(1-\lambda)x_w\|^2+2\alpha\langle y_v,\lambda x_v+(1-\lambda)x_w\rangle=\\
    &=\lambda \|x_v\|^2 + (1-\lambda)\|x_w\|^2  
    \end{split}
\end{equation}
By Corollary 2.15 of \cite{bauschke2017convex}, 
\begin{equation}
    \label{corollary214_blue1}
    \|(\lambda x_v + (1-\lambda)x_w)\|^2=\lambda\|x_v\|^2 +(1-\lambda)\|x_w\|^2-\lambda(1-\lambda)\|x_v-x_w\|^2.
\end{equation}
Hence, \eqref{provethis2_blue1} becomes
\begin{equation}
    \label{endproof}
    \alpha^2\|y_v\|^2+2\alpha\langle y_v,\lambda x_v+(1-\lambda)x_w\rangle-\lambda(1-\lambda)\|x_v-x_w\|^2=0.
\end{equation}
Note that by \eqref{endproof} it must be $\alpha\neq0$ since $x_v$ and $x_w$ are distinct vectors.
There exists $\alpha$ such that \eqref{provethis_blue1} is satisfied (i.e. $\Tilde{x}\in\mathcal{S}(\lambda)$), if and only if

\begin{align*}
    &\Delta:=(\langle y_v, (\lambda x_v + (1-\lambda)x_w)\rangle)^2+\lambda(1-\lambda)\|y_v\|^2\|x_v-x_w\|^2\ge0,
\end{align*}

which holds for every $x_v,x_w\in H$.

\end{proof}

\begin{figure}[H]
    \includegraphics[width=8cm]{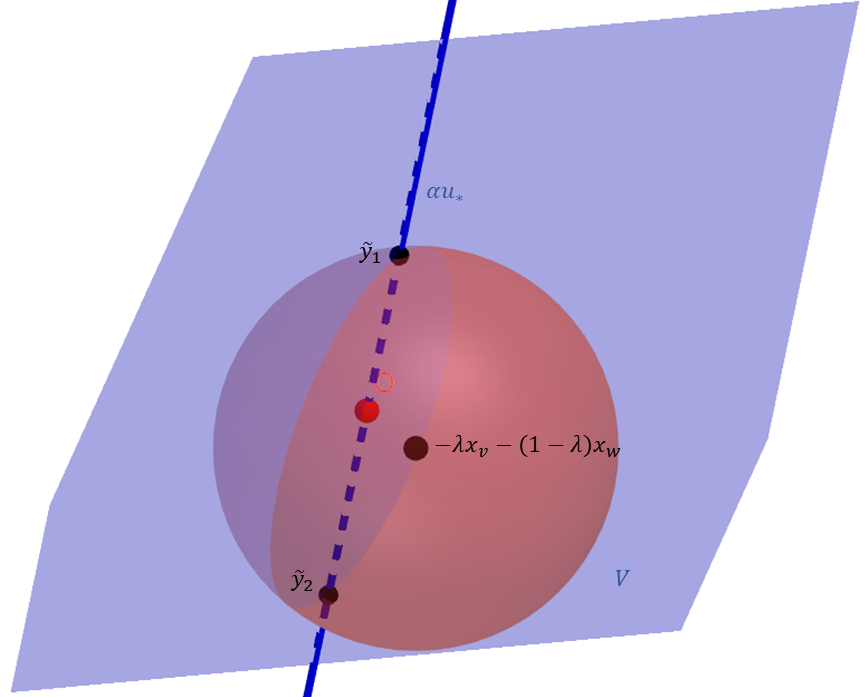}
    \caption{3d representation of \eqref{sphere_blue}. The sphere $\mathcal{S}(\lambda)$ in red is centered in ${-(\lambda(x_v)-(1-\lambda)x_w)}$. $V$ is in light blue, while the line $\alpha y_v$ is in dark blue. $\mathcal{S}(\lambda)$ intersects $\alpha y_v$ in the points $\tilde{y}_1$ and $\tilde{y}_2$. $0$ is in red. The picture was built with the 3D calculator of Geogebra, \cite{gg}. }
    \label{figure1}
\end{figure}

\begin{remark}
For $H=\mathbb{R}^n$, assumption $ii.$ of Theorem \ref{omega0isconvex} is automatically satisfied if $m+1<n$. This assumption is close to $m+1\le n$. For $H=\mathbb{R}^n$ and $m+1\le n$, Theorem \ref{omega0isconvex} can be considered a special case of Theorem 2.3 from \cite{beck2007convexity}.
\end{remark}

\begin{remark}
\begin{enumerate}
\item In \ref{figure1}, we provide a geometrical representation of the sets defined in \eqref{real_system}, in the space of the variable $y\in\mathbb{R}^3$, with $m=1$ such that $m+1=2<n=3$.
In $\mathbb{R}^3$, the above prove shows that the sphere $\mathcal{S}(\lambda)$ centered in ${-(\lambda(x_v)+(1-\lambda)x_w)}$ of radius ${\sqrt{\lambda \|x_v\|^2 + (1-\lambda)\|x_w\|^2}}$, always intersects the line $\alpha u_*$ of parameter $\alpha$, where $u_*$ belongs to a basis of the vector space $V$. 
In fact,
\begin{itemize}
    \item the distance between $0$ and ${-(\lambda(x_v)+(1-\lambda)x_w)}$ is always less than the radius of $\mathcal{S}(\lambda)$ by \eqref{corollary214_blue1},
    \item the line $\alpha u_*$ passes through $0$.
\end{itemize}
\item Observe that the assumption $\Bar{m}<n$ is important for the validity of the presented proof of Theorem \ref{omega0isconvex} in finite dimensions. Otherwise,  the only solution of the system \eqref{linsystem_blue} is $y=0$ which implies that it must be $\tilde{x}=\lambda x_{v}+(1-\lambda)x_{w}\in\mathcal{S}(\lambda)$, i.e.  $\lambda x_{v}\perp (1-\lambda)x_{w}$ which can hardly be satisfied.

\item Under the assumption of \autoref{omega0isconvex}, there could exist a component $i\in\{0,...,m\}$ such that $f_i(x):=\|x\|^2$ and a component $j\in\{0,...,m\}$ such that $f_j(x):=-\|x\|^2$.

In this case, $\Tilde{x}$ must satisfies
\begin{equation*}
    \begin{cases}
     &\|\Tilde{x}\|^2\le \lambda \|x_v\|^2 + (1-\lambda)\|x_w\|^2\\
     &-\|\Tilde{x}\|^2\le -(\lambda \|x_v\|^2 + (1-\lambda)\|x_w\|^2)
    \end{cases}
\end{equation*}

The above proves that, in order to complete the proof of  Theorem \ref{omega0isconvex}, we cannot choose $\Tilde{x}$
such that 
\begin{equation*}
    \|\Tilde{x}\|^2\neq\lambda \|x_v\|^2 + (1-\lambda)\|x_w\|^2
\end{equation*}
   This motivates our approach of looking for suitable $\tilde{x}$ from among elements of $\mathcal{S}(\lambda)$. 

\end{enumerate}
\end{remark}

Theorem \ref{omega0isconvex} allows us to prove that \eqref{kkt} conditions are necessary and sufficient optimality conditions for  \ref{QCQP2}, as stated in the following theorem.

\begin{theorem}
\label{sqcqpcharcter}
Consider \ref{QCQP2} and let the assumption of Theorem \eqref{omega0isconvex} be satisfied. 
Let \autoref{preliminaryassumpt} holds and
$x^*$ be a global minimizer of \ref{QCQP2}.
Then the Fritz-John conditions \eqref{fritzjohn} are necessary for optimality.
Moreover, if there exists a point $x_0\in H$ such that 
\begin{equation*} 
f_k(x_0)<0\ \ \ \forall\,k\in\{1,...,m\},
\end{equation*}
then, for a feasible $x^*$, the conditions
\eqref{kkt} take the form:
there exists a vector $(\gamma_1,...,\gamma_m)\in\mathbb{R}^m_+\backslash0$ such that
\begin{equation}
    \label{kkt2}
    \tag{KKT}
    \begin{split}
        &(i)\ \ \
        2a_Jx^*+b_J+2\sum\limits_{k=1}^m 2a_kx^*+b_k=0,\\
        &(ii)\ \ \gamma_kf_k(x^*)=0\ \ k\in\{1,...,m\},\\
        &(iii)\ \ \ a_J+\sum\limits_{k=1}^m\gamma_ka_k\ge0,
    \end{split}
\end{equation}
are necessary and sufficient for global optimality of $x^{*}$.

\end{theorem}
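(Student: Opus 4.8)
The plan is to derive this statement as a direct corollary of \autoref{omega0isconvex} and \autoref{globalminchar}, the only genuinely new content being the explicit form that the stationarity and curvature conditions take once the operators specialize to scalar multiples of the identity.

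First I would apply \autoref{omega0isconvex} to the functionals $f_0, f_1, \dots, f_m$ associated with \ref{QCQP2}. Under either hypothesis imposed on $H$ --- infinite dimensionality, or $n$-dimensionality together with a maximal linearly independent subset of $\{\langle b_k, \cdot\rangle\}_{k=0}^m$ of cardinality $\bar m < n$ --- that theorem guarantees that $\Omega_0$ is convex. With $\Omega_0$ convex, with \autoref{preliminaryassumpt} in force, and with $x^*$ a global minimizer, all the hypotheses of \autoref{globalminchar} are satisfied. Invoking it yields a vector $(\gamma_0, \dots, \gamma_m) \in \mathbb{R}^{m+1}_+ \setminus 0$ verifying the Fritz-John conditions \eqref{fritzjohn}, which establishes the first assertion.

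Next, assuming the Slater-type condition $f_k(x_0) < 0$ for all $k \in \{1, \dots, m\}$, the second part of \autoref{globalminchar} forces $\gamma_0 > 0$, so that \eqref{fritzjohn} reduces to the \eqref{kkt} conditions, which are then both necessary and sufficient for the global optimality of a feasible $x^*$. It remains only to rewrite \eqref{kkt} in the scalar language of \ref{QCQP2}. Substituting $A_J = a_J I$ and $A_k = a_k I$ and computing the Fr\'echet derivatives of $J(x) = a_J\|x\|^2 + 2\langle b_J, x\rangle + c_J$ and of $f_k(x) = a_k\|x\|^2 + 2\langle b_k, x\rangle + c_k$, one obtains $\nabla J(x) = 2a_J x + 2b_J$ and $\nabla f_k(x) = 2a_k x + 2b_k$; inserting these into the stationarity condition $(i)$ gives the stated linear equation in $x^*$. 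The curvature condition $(iii)$ requires the operator $A_J + \sum_{k=1}^m \gamma_k A_k = \big(a_J + \sum_{k=1}^m \gamma_k a_k\big) I$ to be non negative; since its quadratic form equals $\big(a_J + \sum_{k=1}^m \gamma_k a_k\big)\|x\|^2$, this is equivalent to the scalar inequality $a_J + \sum_{k=1}^m \gamma_k a_k \ge 0$. The complementarity condition $(ii)$ carries over verbatim.

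Because the substantive analytic work has already been done --- the theorem of the alternative \autoref{th:shilbert} obtained via convex separation, and, most importantly, the convexity of $\Omega_0$ established in \autoref{omega0isconvex} through the sphere--kernel intersection argument --- I expect no genuine obstacle here. The only points requiring mild care are keeping track of the factor $2$ in the gradients and observing that non negativity of the scalar operator $\big(a_J + \sum_{k=1}^m \gamma_k a_k\big)I$ is exactly the scalar inequality in $(iii)$, both of which are immediate.
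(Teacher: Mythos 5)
Your proposal is correct and follows essentially the same route as the paper's own proof: invoke \autoref{omega0isconvex} for the convexity of $\Omega_0$, apply \autoref{globalminchar} to obtain the Fritz--John and (under Slater) KKT conditions, and specialize via $A_J=a_JI$, $A_k=a_kI$ so that condition $(iii)$ becomes the scalar inequality $a_J+\sum_{k=1}^m\gamma_k a_k\ge 0$. In fact you are slightly more thorough than the paper, which does not spell out the gradient computation for $(i)$; note that your (correct) stationarity equation $2a_Jx^*+2b_J+\sum_{k=1}^m\gamma_k(2a_kx^*+2b_k)=0$ reveals that the displayed form of $(i)$ in the theorem statement is garbled (the multipliers $\gamma_k$ are missing), a typo in the paper rather than a flaw in your argument.
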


\begin{proof}
By \autoref{omega0isconvex}, \ref{GIS} is convex. 
We can apply \autoref{globalminchar} to complete the proof.
     $A_J+\sum\limits_{k=1}^m\gamma_kA_k$ can be rewritten as $(a_J+\sum\limits_{k=1}^m\gamma_ka_k)I$ when \autoref{assumaI} holds. Then, condition (iii) of  \eqref{kkt} becomes
 $a_J+\sum\limits_{k=1}^m\gamma_ka_k\ge0$ for \ref{QCQP2}. 

\end{proof}

\begin{remark} Let $A:H\rightarrow H$ be a continuous linear operator. 
Let the corresponding quadratic form $\langle x,Ax\rangle$ be positive definite, i.e. there exists $\alpha>0$ such that, $\forall\,x\in H$, $\langle x,Ax\rangle\ge\alpha\|x\|^2$, see \cite{hestenes1951applications}.
Then the function $\|\cdot\|_{A}:H\rightarrow\mathbb{R}_{+}$ defined as $\|x\|_{A}=\sqrt{\langle x,Ax\rangle}$ is a norm in $H$ (see \cite{bonnans2013perturbation}, section 3.3.2). 
With this observation, the characterisation of Theorem \ref{sqcqpcharcter} holds for scalar QCQP problems satisfying the following assumption.

\begin{assumption}
\label{assumaI1}
Let $A:H\rightarrow H$ be a continuous linear operator such that $\langle x,Ax \rangle$ is positive definite. 
Let $A_J,A_{k}$ $\forall\,k\in\{1,...,m\}$ be of the form $A_J=a_JA$, $A_k=a_kA$, where $a_J,a_k\in\mathbb{R}$. Then, $A$ induce the norm $\|\cdot\|_{A}$.
\end{assumption}
\end{remark}

Conditions for a positive quadratic form to be positive definite can be found in Theorem 10.1 of \cite{hestenes1951applications}.

\section{Conclusions}
According to Theorem \ref{globalminchar}, the convexity of \ref{GIS} and the Slater-type constraint qualification for a given QCQP suffice to characterize its global optimality in terms of KKT-type conditions. It is an open problem what is the largest class of QCQP problems for which such characterization holds true.

\section*{acknowledgements}
This project has received funding from the European Union's Horizon 2020 research and innovation programme under the Marie Sk{\l}odowska-Curie grant agreement No 861137.

\bibliographystyle{plain}
\bibliography{BedBrucArx.bib}  

\begin{thebibliography}{10}

\bibitem{barro1}
Moussa Barro, Ali Ouedraogo, and Sado Traore.
\newblock Global optimality condition for quadratic optimization problems under
  data uncertainty.
\newblock {\em Positivity}, 25(3):1027--1044, 2021.

\bibitem{bauschke2017convex}
Heinz~H Bauschke and Patrick~L Combettes.
\newblock {\em Convex Analysis and Monotone Operator Theory in Hilbert Spaces}.
\newblock Springer, 2017.

\bibitem{beck2007convexity}
Amir Beck.
\newblock On the convexity of a class of quadratic mappings and its application
  to the problem of finding the smallest ball enclosing a given intersection of
  balls.
\newblock {\em Journal of Global Optimization}, 39(1):113--126, 2007.

\bibitem{bonnans2013perturbation}
J~Fr{\'e}d{\'e}ric Bonnans and Alexander Shapiro.
\newblock {\em Perturbation analysis of optimization problems}.
\newblock Springer Science \& Business Media, 2013.

\bibitem{burer2}
Samuel Burer and Dieter Vandenbussche.
\newblock A finite branch-and-bound algorithm for nonconvex quadratic
  programming via semidefinite relaxations.
\newblock {\em Mathematical Programming}, 113(2):259--282, 2008.

\bibitem{continopolyak}
von Contino, Maximiliano, Guillermina Fongi, and Santiago Muro.
\newblock Polyak's theorem on hilbert spaces.
\newblock {\em Optimization}, pages 1--15, 2022.

\bibitem{deutsch2001best}
Frank Deutsch.
\newblock {\em Best approximation in inner product spaces}, volume~7.
\newblock Springer, 2001.

\bibitem{dong2018solution}
Vu~Van Dong and Nguyen~Nang Tam.
\newblock On the solution existence of nonconvex quadratic programming problems
  in hilbert spaces.
\newblock {\em Acta Mathematica Vietnamica}, 43(1):155--174, 2018.

\bibitem{fradkov1}
AL~Fradkov and VA~Yakubovich.
\newblock Thes-procedure and duality relations in nonconvex problems of
  quadratic programming.
\newblock {\em Vestn. LGU, Ser. Mat., Mekh., Astron,(1)}, pages 101--109, 1979.

\bibitem{giannessi2006constrained}
Franco Giannessi.
\newblock {\em Constrained Optimization and Image Space Analysis: Volume 1:
  Separation of Sets and Optimality Conditions}, volume~49.
\newblock Springer Science \& Business Media, 2006.

\bibitem{golub1999tikhonov}
Gene~H Golub, Per~Christian Hansen, and Dianne~P O'Leary.
\newblock Tikhonov regularization and total least squares.
\newblock {\em SIAM journal on matrix analysis and applications},
  21(1):185--194, 1999.

\bibitem{gon1}
Jacek Gondzio and E~Alper Y{\i}ld{\i}r{\i}m.
\newblock Global solutions of nonconvex standard quadratic programs via mixed
  integer linear programming reformulations.
\newblock {\em Journal of Global Optimization}, 81(2):293--321, 2021.

\bibitem{gusev1}
Sergei~V Gusev and Andrey~L Likhtarnikov.
\newblock Kalman-popov-yakubovich lemma and the s-procedure: A historical
  essay.
\newblock {\em Automation and Remote Control}, 67(11):1768--1810, 2006.

\bibitem{hestenes1951applications}
Magnus~R Hestenes.
\newblock Applications of the theory of quadratic forms in hilbert space to the
  calculus of variations.
\newblock {\em Pacific J. Math.}, 1(1):525--581, 1951.

\bibitem{gg}
Markus Hohenwarter.
\newblock {G}eo{G}ebra: Ein {S}oftwaresystem f\"ur dynamische {G}eometrie und
  {A}lgebra der {E}bene.
\newblock Master's thesis, Paris Lodron University, Salzburg, Austria, February
  2002.
\newblock (In German.).

\bibitem{holmes2012geometric}
Richard~B Holmes.
\newblock {\em Geometric functional analysis and its applications}, volume~24.
\newblock Springer Science \& Business Media, 2012.

\bibitem{hosseinian2018nonconvex}
Seyedmohammadhossein Hosseinian, Dalila~BMM Fontes, and Sergiy Butenko.
\newblock A nonconvex quadratic optimization approach to the maximum edge
  weight clique problem.
\newblock {\em Journal of Global Optimization}, 72:219--240, 2018.

\bibitem{jeya1}
Vaithilingam Jeyakumar, Gue~Myung Lee, and Guoyin~Y Li.
\newblock Alternative theorems for quadratic inequality systems and global
  quadratic optimization.
\newblock {\em SIAM Journal on Optimization}, 20(2):983--1001, 2009.

\bibitem{jeya2}
Vaithilingam Jeyakumar, Alex~M Rubinov, and Zhi-You Wu.
\newblock Non-convex quadratic minimization problems with quadratic
  constraints: global optimality conditions.
\newblock {\em Mathematical programming}, 110(3):521--541, 2007.

\bibitem{lee2019inexact}
Ching-pei Lee and Stephen~J Wright.
\newblock Inexact successive quadratic approximation for regularized
  optimization.
\newblock {\em Computational Optimization and Applications}, 72:641--674, 2019.

\bibitem{loc0}
Marco Locatelli and Fabio Schoen.
\newblock {\em Global optimization: theory, algorithms, and applications}.
\newblock SIAM, 2013.

\bibitem{Boyd1}
Jaehyun Park and Stephen Boyd.
\newblock General heuristics for nonconvex quadratically constrained quadratic
  programming.
\newblock {\em arXiv preprint arXiv:1703.07870}, 2017.

\bibitem{blek1}
Pablo~A Parrilo, Grigoriy Blekherman, and Rekha~R Thomas.
\newblock {\em Semidefinite optimization and convex algebraic geometry}.
\newblock SIAM Society for Industrial and Applied Mathematics., 2013.

\bibitem{polik1}
Imre P{\'o}lik and Tam{\'a}s Terlaky.
\newblock A survey of the s-lemma.
\newblock {\em SIAM review}, 49(3):371--418, 2007.

\bibitem{rocka}
Ralph~Tyrell Rockafellar.
\newblock Convex analysis.
\newblock In {\em Convex analysis}. Princeton university press, 2015.

\bibitem{RZ1}
M~Ruiz~Gal{\'a}n.
\newblock A theorem of the alternative with an arbitrary number of inequalities
  and quadratic programming.
\newblock {\em Journal of Global Optimization}, 69(2):427--442, 2017.

\bibitem{signoretto2008quadratically}
Marco Signoretto, Kristiaan Pelckmans, and Johan~AK Suykens.
\newblock Quadratically constrained quadratic programming for subspace
  selection in kernel regression estimation.
\newblock In {\em Artificial Neural Networks-ICANN 2008: 18th International
  Conference, Prague, Czech Republic, September 3-6, 2008, Proceedings, Part I
  18}, pages 175--184. Springer, 2008.

\bibitem{song2022calabi}
Mengmeng Song and Yong Xia.
\newblock Calabi-polyak convexity theorem, yuan's lemma and s-lemma: extensions
  and applications.
\newblock {\em Journal of Global Optimization}, pages 1--14, 2022.

\bibitem{stetsyuk2016global}
Petro~I Stetsyuk, Tatiana~E Romanova, and Guntram Scheithauer.
\newblock On the global minimum in a balanced circular packing problem.
\newblock {\em Optimization Letters}, 10(6):1347--1360, 2016.

\bibitem{van2021optimality}
Vu~Van~Dong.
\newblock Optimality conditions for quadratic programming problems in hilbert
  spaces.
\newblock {\em Taiwanese Journal of Mathematics}, 25(5):1073--1088, 2021.

\bibitem{xia2016s}
Yong Xia, Shu Wang, and Ruey-Lin Sheu.
\newblock S-lemma with equality and its applications.
\newblock {\em Mathematical Programming}, 156:513--547, 2016.

\bibitem{xia2021chebyshev}
Yong Xia, Meijia Yang, and Shu Wang.
\newblock Chebyshev center of the intersection of balls: complexity, relaxation
  and approximation.
\newblock {\em Mathematical Programming}, 187(1):287--315, 2021.

\bibitem{yakubovich1992nonconvex}
VA~Yakubovich.
\newblock Nonconvex optimization problem: The infinite-horizon linear-quadratic
  control problem with quadratic constraints.
\newblock {\em Systems \& Control Letters}, 19(1):13--22, 1992.

\end{thebibliography}

\end{document}